\documentclass{article}

\usepackage{amsmath, amsthm, amssymb, mathrsfs, amscd, amsthm, amscd,amsfonts,calligra,mathrsfs,adjustbox,lipsum}
\usepackage{indentfirst,url}
\usepackage[all]{xy}
\usepackage{url}
\usepackage{tikz}
\usepackage{tikz-cd}
\usepackage[colorlinks,plainpages]{hyperref}
\usepackage{paracol}
\usepackage{float}
\usetikzlibrary{arrows,decorations.markings}
\usepackage{mathtools}
\usepackage{caption}
\usepackage[margin=75pt]{geometry}

\hypersetup{
	colorlinks=true,
	linkcolor=blue,
	filecolor=magenta,      
	urlcolor=cyan,
}
\setlength\parindent{0pt}

\newtheorem*{Acknowledgments}{ACKNOWLEDGMENTS}

\makeatletter
\def\@endtheorem{\endtrivlist\@endpefalse } 
\makeatother

\makeatletter
\def\th@plainitalic{%
  \thm@notefont{}
  \itshape 
}
\makeatother

\theoremstyle{plainitalic}

\newtheorem{Theorem}{Theorem}
\newtheorem{Corollary}{Corollary}
\newtheorem{Proposition}{Proposition}
\newtheorem{Lemma}{Lemma}

\newtheorem{Remark}{Remark}

\begin{document}

\title{Infinitely many new sequences of surfaces  of  general type with maximal Picard number converging to the Severi line}

\author{Nguyen Bin and Vicente Lorenzo}

\newcommand{\BinAddresses}{{
		\bigskip
		\footnotesize
        \text{Nguyen Bin,}\par\nopagebreak	
		\text{Department of Mathematics and Statistics,}\par\nopagebreak	
		\text{Quy Nhon University,}\par\nopagebreak	
            \text{170 An Duong Vuong Street, Quy Nhon,}\par\nopagebreak
		\text{Vietnam.}\par\nopagebreak		
		\textit{E-mail address}: \texttt{nguyenbin@qnu.edu.vn}
		
}}

\newcommand{\VicenteAddresses}{{
		\bigskip
		\footnotesize
        \text{Vicente Lorenzo,}\par\nopagebreak	
		\text{E.T.S. de Ingeniería y Sist. de Telecom.,}\par\nopagebreak	
		\text{Universidad Politécnica de Madrid,}\par\nopagebreak	
            \text{Calle Nikola Tesla, Madrid,}\par\nopagebreak	
		\text{Spain.}\par\nopagebreak		
		\textit{E-mail address}: \texttt{vlorenzogarcia@gmail.com}
		
}}

\newcommand\blfootnote[1]{%
	\begingroup
	\renewcommand\thefootnote{}\footnote{#1}%
	\addtocounter{footnote}{-1}%
	\endgroup
}

\newcommand{\MSC}{\textbf{Mathematics Subject Classification (2010):}}

\newcommand{\Key}{\textbf{Key words:}}

\maketitle

 \begin{abstract}
 \noindent
Examples of algebraic surfaces of general type with maximal Picard number are not abundant in the literature. Moreover, most known examples either possess low invariants, lie near the Noether line $K^2=2\chi-6$ or are somewhat  scattered. A notable exception is Persson's sequence of double covers of the projective plane with maximal Picard number, whose invariants converge to the Severi line $K^2=4\chi$. This note is devoted to the construction of infinitely many new sequences of surfaces of general type with maximal Picard number whose invariants converge to the Severi line.
 \end{abstract}

 \blfootnote{\MSC{ 14J29}.}
\blfootnote{\Key{ Picard number, Surfaces of general type, Abelian covers.}}

\section{Introduction} \label{Introduction}

The self-intersection of the canonical class $K^2_X$ and the holomorphic Euler characteristic $\chi(\mathcal{O}_X)$ of an algebraic surface $X$ are two of its main numerical invariants. If $X$ is a minimal surface of general type  defined over the complex numbers $\mathbb{C}$, it is well-known that $(K_X^2,\chi(\mathcal{O}_X))$ is a pair of strictly positive integers satisfying both Noether's inequality $K^2_X\geq 2\chi(\mathcal{O}_X)-6$ and the Bogomolov-Miyaoka-Yau inequality $K^2_X\leq 9\chi(\mathcal{O}_X)$ (cf. 
\cite[Chapter VII]{MR2030225}). More challenging is the inverse problem of determining whether given an admissible pair, i.e. a pair of strictly positive integers $(K^2, \chi)$ satisfying both Noether's inequality and the Bogomolov-Miyaoka-Yau inequality,  there exists a minimal surface of general type $X$ such that $K^2_X=K^2$ and $\chi(\mathcal{O}_X)=\chi$. The first systematic approach to this problem, known as the geographical question, was taken by Persson \cite{MR631426}, who filled in most of the region $2\chi-6\leq K^2\leq 8\chi$.
Chen \cite{MR0884652}, \cite{MR1091940}
then filled in some of the remaining gaps, extending coverage to a substantial portion of the region $8\chi< K^2 <9\chi$. 
Despite the presence of some missing examples, the current consensus is that these instances primarily stem from technical factors rather than indicating any inherent issues.

Given the prevailing agreement that the original geographical question can be given an affirmative answer, numerous authors directed their attention towards exploring the geography of surfaces of general type with special features such as  genus $2$
fibrations \cite{MR0555712}, simply-connectedness \cite{MR631426}, $2$-divisibility of the canonical
class \cite{MR1404912}, global $1$-forms \cite{MR2931875}, $\mathbb{Z}_2^2$-actions \cite{MR4626843}, etc.
Specially intriguing is the case of the geography of surfaces of general type with maximal Picard number. 

The Picard number $\rho(X)$ of a smooth projective surface $X$ is the rank of its Neron-Severi group $NS(X)$ which, defined as the  group of divisors of $X$ modulo numerical equivalence, is finitely generated. The Picard number of $X$ is bounded above by the Hodge number $h^{1,1}(X)=\text{dim } H^1(X,\Omega^1_X)$ and  $X$ is said to have maximal Picard number if $\rho(X)=h^{1,1}(X)$. Although there is no a priori reason to believe the geography of minimal surfaces of general type should not be highly populated by surfaces with maximal Picard number, the examples of such surfaces are scarce in the literature (see \cite{MR3322784} for an overview on the subject). In summary:
\begin{itemize}
    \item Surfaces $X$ with geometric genus $p_g(X)=0$ have maximal Picard number.
    \item The first non-trivial examples of algebraic surfaces of general type with maximal Picard number were published by Persson \cite{MR661198}. On the one hand, if we denote by $\mathfrak{M}_{K^2,\chi}$ Gieseker's moduli space \cite{MR498596} of canonical models of surfaces of general type with self-intersection of the canonical class $K^2$ and holomorphic Euler characteristic $\chi$, Persson \cite[Theorem 1]{MR661198} proved that given an admissible pair $(K^2,\chi)$ such that $K^2=2\chi-6$ and $\chi\not\equiv 0 \text{ mod } 6$, then all the connected components of $\mathfrak{M}_{K^2,\chi}$ contain canonical models whose minimal resolution has maximal Picard number. On the other hand, Persson \cite[Theorem 3]{MR661198} proved that for every integer $n\geq 4$, if we denote $K^2=2(n-3)^2$ and $ \chi=\frac{1}{2}(n-1)(n-2)+1$, then $\mathfrak{M}_{K^2,\chi}$ contains canonical models whose minimal resolution has maximal Picard number.
    \item  Examples of surfaces of general type with maximal Picard number and low geometric genus can be found in 
\cite[6.2, 6.3]{MR3322784},
\cite{MR3460339}, \cite{MR3683423}.
\item Some scattered examples can be found in \cite[6.5]{MR3322784}, \cite{MR2855811}. 
\item Further examples can be found in \cite{2014arXiv1406.2143A}, \cite{2016arXiv161100470S}.
\item The authors \cite{MR4761778}
proved that $\mathfrak{M}_{K^2,\chi}$ contains canonical models whose minimal resolution has maximal Picard number for every admissible pair such that $K^2\leq \frac{5}{2}\chi-11$. In addition, some scattered examples one above the Noether line were provided.
\end{itemize}

 The main result of \cite{MR4761778} 
draws inspiration from \cite[Theorem 1]{MR661198}. Whereas Persson obtained the examples of \cite[Theorem 1]{MR661198} as double covers of rational surfaces whose branch locus has a very particular configuration, the authors \cite{MR4761778} obtained theirs as bidouble covers of rational surfaces. Similarly, this paper introduces new families of surfaces of general type with maximal Picard number drawing inspiration from  \cite[Theorem 3]{MR661198}, where surfaces of general type with maximal Picard number were obtained as double covers of the projective plane $\mathbb{P}^2=\text{Proj}(\mathbb{C}[X_0,X_1,X_2])$ whose branch locus derives from the curve: 
 \begin{equation*}
C=[(X_0^n+X_1^n+X_2^n)^2-4\cdot ((X_0X_1)^n+(X_0X_2)^n+(X_1X_2)^n)=0].
 \end{equation*}

 Considering bidouble covers of the projective plane whose branch locus derives from the curve $C$, one can get the following:

 \begin{Theorem}\label{main_theorem}
    Given  $n \in \mathbb{Z}_{\geq 2}$ let us define 
    \begin{equation*}
        K^2=4n^2-12n+9 \qquad \text{and} \qquad \chi=n^2-n+1.
    \end{equation*}
    Then $\mathfrak{M}_{K^2, \chi}$ contains canonical models $X_{n}$ whose minimal resolution has maximal Picard number.
\end{Theorem}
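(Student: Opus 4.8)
The plan is to realize the minimal resolution of $X_n$ as a smooth $\mathbb{Z}_2^2$-cover of a blow-up of $\mathbb{P}^2$ whose branch data is assembled from the curve $C$, to read off $(K^2,\chi)$ from the abelian cover formulas, and finally to deduce maximality of the Picard number by splitting the cohomology into three intermediate double covers and treating each piece by Persson's method. First I would record the geometry of $C$. Let $\phi\colon\mathbb{P}^2\to\mathbb{P}^2$ be the $n$-th power map $[X_0:X_1:X_2]\mapsto[X_0^n:X_1^n:X_2^n]$ and let $Q=\{u^2+v^2+w^2-2uv-2uw-2vw=0\}$ be the smooth conic inscribed in the coordinate triangle. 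A direct substitution shows $C=\phi^{-1}(Q)$ as reduced curves. Since $Q$ is tangent to the three coordinate lines at $[0:1:1]$, $[1:0:1]$, $[1:1:0]$, the curve $C$ has exactly $3n$ singular points, each of type $A_{n-1}$, sitting on the coordinate triangle at $[0:1:\zeta]$, $[1:0:\zeta]$, $[1:\zeta:0]$ with $\zeta^n=1$. I would then take the branch divisors
\begin{equation*}
D_1=\{X_0=0\},\qquad D_2=\{X_1=0\},\qquad D_3=C+\{X_2=0\},
\end{equation*}
with line bundles $L_1=L_2=\mathcal{O}(n+1)$ and $L_3=\mathcal{O}(1)$. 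These satisfy $2L_i\equiv D_j+D_k$ for $\{i,j,k\}=\{1,2,3\}$, hence define a $\mathbb{Z}_2^2$-cover $\pi\colon X\to\mathbb{P}^2$ whose total branch locus is $C$ together with the coordinate triangle.

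\textbf{Invariants, minimality, general type.} The singularities of $X$ lie over the $3n$ special points and the triangle vertices. Over a vertex only $D_1$ and $D_2$ meet, transversally, so $X$ is smooth there; over an $A_{n-1}$ point of $C$ the local model (already the double cover $z^2=y^2-x^n$ has an $A_{n-1}$ surface singularity) shows that, after blowing up $\mathbb{P}^2$ suitably to make the branch data a smooth normal crossings divisor, the resulting cover is smooth and its canonical model has only rational double points. Consequently $K^2$ and $\chi$ are given by the clean $\mathbb{Z}_2^2$-cover formulas
\begin{equation*}
\chi(\mathcal{O}_X)=1+\sum_{i=1}^3\frac{(\ell_i-1)(\ell_i-2)}{2},\qquad
K_X^2=4\Bigl(K_{\mathbb{P}^2}+\tfrac12(D_1+D_2+D_3)\Bigr)^2,
\end{equation*}
where $\ell_i=\deg L_i$. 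With $(\ell_1,\ell_2,\ell_3)=(n+1,n+1,1)$ the first gives $\chi=1+n(n-1)=n^2-n+1$, and since $\deg(D_1+D_2+D_3)=2n+3$ the second gives $K^2=(2n-3)^2=4n^2-12n+9$. The same eigenspace computation yields $q(X)=\sum_i h^1(\mathbb{P}^2,\mathcal{O}(-\ell_i))=0$. I would then check that $K_X$ is nef and big (it pulls back a positive class for $n\geq 2$), so that $X$ is minimal of general type and its canonical model is the desired point $X_n\in\mathfrak{M}_{K^2,\chi}$.

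\textbf{Maximality of the Picard number.} Here I would exploit the Galois action. Under $G=\mathbb{Z}_2^2$ the cohomology splits as $H^2(X,\mathbb{C})=H^2(X)^{\mathrm{triv}}\oplus\bigoplus_{i=1}^3 H^2(X)^{\chi_i}$, where the invariant part is isomorphic to the $H^2$ of the rational base (pull-back of a line together with the exceptional curves), hence is algebraic. Each eigenspace $H^2(X)^{\chi_i}$ is the anti-invariant part of the intermediate double cover $Y_i=X/\ker\chi_i\to\mathbb{P}^2$, branched over $D_j+D_k$; therefore $\rho(X)=h^{1,1}(X)$ as soon as each $Y_i$ has maximal Picard number. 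Now $Y_3$ is branched over the two lines $D_1+D_2$, hence is rational and trivially maximal, while $Y_1$ and $Y_2$ are branched over $C$ enlarged by two coordinate lines. Because $C=\phi^{-1}(Q)$ and the coordinate lines also pull back from the target triangle, $Y_1$ and $Y_2$ are Delsarte surfaces dominated by a Fermat surface; they are of exactly the type treated in Persson's Theorem~3, and the same argument shows they carry no transcendental $(1,1)$-classes. Assembling the three pieces proves $\rho(X)=h^{1,1}(X)$.

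\textbf{Main obstacle.} The delicate point is the last one: verifying that $Y_1$ and $Y_2$, branched over $C$ together with the extra coordinate lines, still have maximal Picard number. This amounts to re-running Persson's Fermat/Delsarte analysis of the transcendental lattice in the presence of the additional lines, and is where I expect the genuine work to lie. By contrast, the preceding local study of the $3n$ singular points, needed to guarantee canonical singularities and hence the clean values of $(K^2,\chi)$, is technically heavy but essentially routine.
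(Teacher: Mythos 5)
Your construction is exactly the paper's: the same $\mathbb{Z}_2^2$-cover of $\mathbb{P}^2$ with $B_1=l_1$, $B_2=l_2$, $B_3=l_3+C$ and $L_1=L_2=\mathcal{O}_{\mathbb{P}^2}(n+1)$, $L_3=\mathcal{O}_{\mathbb{P}^2}(1)$, and your computation of $K^2=(2n-3)^2$ and $\chi=n^2-n+1$ agrees with the paper's. The problem is the last step. Your plan for the Picard number --- split $H^2(X)$ into character eigenspaces, identify each nontrivial eigenspace with the anti-invariant cohomology of an intermediate double cover $Y_i$, and show each $Y_i$ has maximal Picard number --- is a legitimate strategy in principle, but the decisive verification is exactly what you leave undone. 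The surfaces $Y_1,Y_2$ are branched over $C$ \emph{plus two coordinate lines} (degree $2n+2$), so they are not the surfaces of Persson's Theorem 3 (branch degree $2n$), and "re-running the Fermat/Delsarte analysis in the presence of the additional lines" is not a routine citation; it is the entire content of the theorem. As written, the proposal proves the numerology but not the maximality of $\rho$.

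The paper's actual argument is much more elementary and never touches transcendental lattices or Delsarte surfaces; you discard the very structure it uses when you propose to blow up $\mathbb{P}^2$ until the branch data is normal crossings. Keep the singular branch divisor as it is: the $n$ points of type $A_{n-1}$ of $C$ on $l_3$ give $n$ singularities of type $D_{n+2}$ of $B_3$ away from $B_1\cup B_2$, hence $2n$ singularities of type $D_{n+2}$ on the canonical model $X$ (two preimages each), while the $n$ points of $C$ on $l_1$ and the $n$ on $l_2$ are points where an $A_{n-1}$ branch of $B_3$ meets the transversal smooth branch divisor $B_1$ (resp.\ $B_2$), producing $2n$ singularities of type $A_{2n-1}$ on $X$ (Corollary \ref{Z22Sing}). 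The exceptional curves of the minimal resolution then contribute $2n(n+2)+2n(2n-1)=6n^2+2n$ numerically independent classes, and adding the pullback of a line gives $6n^2+2n+1=10\chi-K^2-2q=h^{1,1}(X)$, so maximality follows from Lemma \ref{PicardSurjectiveMorphism} by pure counting. In short: correct set-up and invariants, but the key step is replaced by an acknowledged obstacle, and the (far simpler) mechanism that actually closes the proof --- accumulating enough ADE singularities that their exceptional divisors saturate $h^{1,1}$ --- is missing from your proposal.
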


Blowing-up $\mathbb{P}^2$ at a point, taking cyclic covers branched along two fibers of the blown-up surface and then considering a bidouble cover whose branch locus derives from $C$, one can get the following:

\begin{Theorem}\label{main_theorem_1}
    Given $m\in\mathbb{Z}_{\geq 3}$ and $ n\in 2\cdot \mathbb{Z}_{\geq 1}$   let us define 
    \begin{equation*}
        K^2=4mn^2-4(m+2)n+8 \qquad \text{and} \qquad \chi=mn^2-n+1.
    \end{equation*}
    Then $\mathfrak{M}_{K^2, \chi}$ contains canonical models $X_{m,n}$ whose minimal resolution has maximal Picard number.
\end{Theorem}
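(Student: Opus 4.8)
The plan is to realize $X_{m,n}$ as the minimal resolution of a $(\mathbb{Z}/2)^2$-cover of a rational surface, to read off $K^2$ and $\chi$ from the standard formulas for bidouble covers, and to deduce the maximal Picard number from the observation that, up to birational equivalence, $X_{m,n}$ is an abelian cover of $\mathbb{P}^2$ branched along an arrangement of lines. First I would make the construction explicit. Blowing up $\mathbb{P}^2$ at a point $p$ produces the Hirzebruch surface $\mathbb{F}_1$ with its ruling $f\colon\mathbb{F}_1\to\mathbb{P}^1$, whose fibers are the proper transforms of the lines through $p$. Choosing two such fibers $F_1,F_2$, the degree-$m$ cyclic cover $\psi\colon Z\to\mathbb{F}_1$ totally ramified over $F_1$ and $F_2$ exists: it is the cover associated to $\mathcal{O}_{\mathbb{F}_1}(F)$ through the relation $F_1+(m-1)F_2\in|mF|$, it is pulled back from the degree-$m$ cover $\mathbb{P}^1\to\mathbb{P}^1$, $z\mapsto z^m$, of the base, and since $F_1$ and $F_2$ are disjoint it is smooth. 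Hence $Z$ is again a rational ruled surface, so in particular $h^{2,0}(Z)=0$, and I would record $K_Z$ together with the relevant intersection numbers on $Z$.

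Second, I would set up the bidouble cover. Since $n$ is even, the defining polynomial of $C$ factors, up to a scalar, as
\begin{equation*}
(X_0^{n/2}+X_1^{n/2}+X_2^{n/2})(-X_0^{n/2}+X_1^{n/2}+X_2^{n/2})(X_0^{n/2}-X_1^{n/2}+X_2^{n/2})(X_0^{n/2}+X_1^{n/2}-X_2^{n/2}),
\end{equation*}
so that $C$ is the union of four curves $C_1,C_2,C_3,C_4$, each of degree $n/2$ and each of the form $C_i=\phi_{n/2}^{-1}(\ell_i)$ for a line $\ell_i$ under the power map $\phi_{n/2}\colon[X_0:X_1:X_2]\mapsto[X_0^{n/2}:X_1^{n/2}:X_2^{n/2}]$. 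I would take the proper transforms of the $C_i$ on $\mathbb{F}_1$, pull them back along $\psi$ to obtain curves on $Z$, and distribute them among three effective divisors $D_1,D_2,D_3$, adding suitable fibers or the negative section to absorb the parities. Line bundles $L_1,L_2,L_3$ with $2L_i\equiv D_j+D_k$ whenever $\{i,j,k\}=\{1,2,3\}$ then determine the cover, and I would verify the divisibility and linear-equivalence conditions guaranteeing its existence. After analyzing the singularities of $D_1+D_2+D_3$ — the mutual intersections and tangencies of the Fermat-type curves, together with their incidences with the ramification of $\psi$ — I would pass to the canonical resolution and evaluate $\chi(\mathcal{O})$ and $K^2$ by the standard formulas for the invariants of bidouble covers, checking that they equal $mn^2-n+1$ and $4mn^2-4(m+2)n+8$, and that the surface is minimal of general type, so that its canonical model defines a point of $\mathfrak{M}_{K^2,\chi}$.

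Finally, for the maximal Picard number I would argue as follows. Contracting the exceptional curve of $\mathbb{F}_1\to\mathbb{P}^2$ and composing the covers, a smooth model of $X_{m,n}$ is an abelian cover of $\mathbb{P}^2$ whose branch locus is a union of lines: the coordinate triangle produced by $\phi_{n/2}$, the four lines $\ell_i$ carrying the components of $C$, and the two lines through $p$ carrying the ramification of $\psi$. Such a cover is a Delsarte surface and is therefore dominated by a Fermat surface, whose second cohomology is a Hodge structure of CM type; since sub-Hodge structures of CM Hodge structures are again of CM type, $H^2(X_{m,n},\mathbb{Q})$ is of CM type, and a weight-two CM Hodge structure carries no transcendental $(1,1)$-classes. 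Thus the transcendental lattice satisfies $T_{X_{m,n}}\otimes\mathbb{C}=H^{2,0}\oplus H^{0,2}$, which gives $\rho(X_{m,n})=h^{1,1}(X_{m,n})$; as maximal Picard number is preserved under blow-ups, the conclusion descends to the minimal resolution.

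The main obstacle is the second step: the precise resolution of the singularities of $D_1+D_2+D_3$, which now also involves the ramification of the degree-$m$ cyclic cover, and the ensuing intersection-theoretic bookkeeping needed to obtain exactly the stated values of $K^2$ and $\chi$ and to confirm minimality and general type. By contrast, once the line-arrangement (Delsarte) structure is identified, the maximal Picard number follows formally, the only care being to ensure that the canonical resolution contributes only algebraic $(1,1)$-classes.
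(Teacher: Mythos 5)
Your geometric setup (blow up $\mathbb{P}^2$ at a point, take the degree-$m$ cyclic cover of $\mathbb{F}_1$ branched over two fibers, then a bidouble cover whose branch locus derives from $C$) matches the paper's, and the factorization of $C$ for even $n$ into four Fermat-type curves is correct. But the step on which the theorem actually turns --- maximality of the Picard number --- fails in your proposal, for two independent reasons. First, the assertion that a weight-two CM Hodge structure carries no transcendental $(1,1)$-classes is false: the transcendental part decomposes into eigenspaces under the CM field, but nothing forces the $(1,1)$-eigenspaces to be algebraic. Fermat surfaces themselves are the standard counterexample (the degree-$5$ Fermat surface has $\rho=37<45=h^{1,1}$, and maximality fails for most degrees), so even a genuine Fermat dominance would not give $\rho=h^{1,1}$. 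Second, the Delsarte/Fermat-dominance claim is itself unjustified: after composing the bidouble cover, the cyclic cover, the blow-down and the power map $\phi_{n/2}$, the branch locus on the target $\mathbb{P}^2$ is a union of some nine lines, the composite is not obviously Galois (let alone abelian), and Shioda's theory applies to covers branched along the three coordinate lines, not to arbitrary line arrangements.

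The paper's mechanism is entirely different and cannot be bypassed. The branch data are arranged (with $\tilde C$ kept whole inside $B_3=\Delta_0+\tilde l_3+\tilde C$ and the fibers $\tilde l_1,\tilde l_2$ over the ramification of $\psi$ serving as the other branch divisors) so that, via Lemma \ref{PerssonsTrick}, c) the $A_{n-1}$ points of $C$ on $l_1,l_2$ become $A_{mn-1}$ points of $\tilde C$ sitting on the branch divisors, whence $D_{mn+2}$ points of the total branch locus and, by Corollary \ref{Z22Sing}, $A_{2mn-1}$ points of the canonical model $X$; together with Remark \ref{DoubleSing} this gives exactly $2mn$ singularities of type $D_{n+2}$ and $2n$ of type $A_{2mn-1}$. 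The exceptional curves of the minimal resolution then contribute $2mn(n+2)+2n(2mn-1)$ independent classes, which together with the pullbacks of $\Delta_0$ and $F$ equals $h^{1,1}(X)=6mn^2+(4m-2)n+2$ on the nose (Lemma \ref{PicardSurjectiveMorphism}). Your proposal never counts singularities against $h^{1,1}$ and in fact treats the resolution classes as a technicality rather than as the entire source of the Picard group. Note also that your invariant computation --- which depends on how the four components of $C$ are distributed among $D_1,D_2,D_3$, a choice you do not pin down --- is left unexecuted, so the identification with the stated $(K^2,\chi)$ and the check that the singularities remain ADE (needed for $X$ to be a canonical model) are not established either.
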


Finally, blowing-up $\mathbb{P}^2$ at a point, taking cyclic covers branched along two fibers of the blown-up surface and then considering a  double cover whose branch locus derives from $C$, one can get the following:

\begin{Theorem}\label{main_theorem_2}
    Given $m\in \mathbb{Z}_{\geq 2}$ and $ n\in 2\cdot \mathbb{Z}_{\geq 2}$ let us define 
    \begin{equation*}
        K^2=2mn^2-4(m+1)n+8 \qquad \text{and} \qquad \chi=\frac{1}{2}mn(n -1)+1.
    \end{equation*}
    Then $\mathfrak{M}_{K^2, \chi}$ contains canonical models $X_{m,n}$ whose minimal resolution has maximal Picard number.
\end{Theorem}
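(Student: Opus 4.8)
The plan is to follow the same three-step pattern that presumably underlies Theorems \ref{main_theorem} and \ref{main_theorem_1}, adapting it to a genuine double cover. First I would build $X_{m,n}$ explicitly as the canonical model of a smooth surface $Y_{m,n}$ produced by the stated tower of covers. Start from $W_1 = Bl_p\,\mathbb{P}^2$ and the ruling $f\colon W_1\to\mathbb{P}^1$ induced by the pencil of lines through $p$; let $W_2\to W_1$ be the degree-$m$ cyclic cover branched along two fibers $F_1,F_2$ of $f$, which is nothing but the base change of $f$ along the degree-$m$ map $\mathbb{P}^1\to\mathbb{P}^1$ totally ramified over $f(F_1),f(F_2)$, so that $W_2$ is again a rational ruled surface. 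After fixing $p$ and $F_1,F_2$ appropriately, let $L$ be the line bundle with $2L\equiv \widetilde{C}$, where $\widetilde{C}\subset W_2$ is the reduced total transform of the curve $C$, and take the double cover $Y_{m,n}\to W_2$ branched along $\widetilde{C}$. The hypothesis $n\in 2\mathbb{Z}_{\geq 2}$ is what makes $\widetilde{C}$ two-divisible in $\Picard(W_2)$ and forces the tangent lines below to split; it also guarantees that the singularities of the naive double cover, sitting over the nodes of $C$, are rational double points. Contracting the $(-1)$- and $(-2)$-curves of the minimal resolution then yields the canonical model $X_{m,n}$.

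Second, I would compute the invariants. For the double cover this rests on the standard pair of formulas
\[
K_{Y_{m,n}}^2 = 2\,(K_{W_2}+L)^2, \qquad \chi(\mathcal{O}_{Y_{m,n}}) = 2\,\chi(\mathcal{O}_{W_2}) + \tfrac12\,L\,(K_{W_2}+L),
\]
combined with Riemann--Hurwitz for the degree-$m$ base change $W_2\to W_1$ and the blow-up formula for $W_1\to\mathbb{P}^2$. One then subtracts the contributions of the rational double points coming from the nodes of $C$ --- whose number is read off from the factorization of $C$ discussed below --- in order to pass to the minimal resolution and then to $X_{m,n}$. Matching the resulting expressions should reproduce $K^2 = 2mn^2 - 4(m+1)n + 8$ and $\chi = \tfrac12 mn(n-1)+1$; as a consistency check, $K^2 - 4\chi = -2mn - 4n + 4 < 0$ while $K^2/\chi\to 4$, so the $X_{m,n}$ are of general type and approach the Severi line from below, exactly as claimed.

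Third --- and this is the heart of the matter --- I would establish that the minimal resolution $\widetilde{X}_{m,n}$ has maximal Picard number, i.e.\ that its transcendental lattice attains the minimal possible rank $2p_g$. The key is the special geometry of $C$: setting $u=X_0^n,\ v=X_1^n,\ t=X_2^n$, its defining equation becomes the conic $u^2+v^2+t^2-2uv-2vt-2tu=0$, which is inscribed in the coordinate triangle, being tangent to each line $\{X_i=0\}$. Hence the double cover of $\mathbb{P}^2_{[u:v:t]}$ branched along this conic is $\mathbb{P}^1\times\mathbb{P}^1$, each coordinate line splits into two fibers, and the $n$-th power map realizes the double cover of $\mathbb{P}^2_{[X]}$ along $C$ as an abelian cover of $\mathbb{P}^1\times\mathbb{P}^1$ branched only along fibers. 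Reading the ruling in one direction, this exhibits $\widetilde{X}_{m,n}$ as an isotrivial fibration whose general fibre is the Fermat curve of degree $n$ (the $\mathbb{Z}_n\times\mathbb{Z}_n$-Galois cover of $\mathbb{P}^1$ branched at three points), the vertical $\mathbb{Z}_m$-base change only modifying the base and the number of singular fibres. Consequently $\widetilde{X}_{m,n}$ is dominated by, and birational to a finite quotient of, a product of curves all of whose factors are Fermat-type, hence have Jacobians of CM type. Since the transcendental Hodge structure of such a product is of CM type and therefore carries no classes of type $(1,1)$ outside its algebraic part, and since this property survives passage to the relevant invariants, to the sub-Hodge-structures cut out by the Galois characters, and to blow-ups, it follows that $\widetilde{X}_{m,n}$ has maximal Picard number.

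The hard part is this last step, and within it two verifications. The first is that after the blow-up and the degree-$m$ cyclic base change the extra ramification of the factor curves still occurs at CM positions (roots of unity), so that the ambient product of curves --- and hence its finite quotient --- genuinely retains maximal Picard number; a cyclic cover of $\mathbb{P}^1$ branched at five or more arbitrary points would fail this. The second is book-keeping: one must check that neither the resolution of the rational double points nor the singular fibres of the isotrivial fibration introduce cohomology classes outside the algebraic part, so that the transcendental lattice is really forced down to rank $2p_g$. Equivalently, if one prefers to argue through the eigenspace decomposition of $H^{2,0}(\widetilde{X}_{m,n})$ under the Galois group of the abelian cover, the obstacle becomes the verification that every character occurring in $H^{2,0}$ contributes a one-dimensional CM sub-Hodge-structure. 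The construction and the invariant computation of the first two steps, though laborious, should be routine once the covering data has been pinned down.
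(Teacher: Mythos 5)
Your overall architecture (blow-up, degree-$m$ cyclic base change, double cover, standard invariant formulas) matches the paper's, but there are two concrete problems. First, the branch divisor. You take the double cover branched along $\widetilde{C}$, the reduced transform of $C$ alone; the paper's cover is branched along $B=\tilde{C}+\tilde{l}_1+\tilde{l}_2$, where $\tilde{l}_1,\tilde{l}_2$ are the reduced preimages of the two fibers over which the $\mathbb{Z}_m$-cover ramifies. This is not cosmetic: with $L=n\Delta_0+mnF$ one gets $K^2=2(n-2)(mn-4)$ and $\chi=\tfrac12 mn(n-1)-n+2$, not the stated $2mn^2-4(m+1)n+8$ and $\tfrac12 mn(n-1)+1$. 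Moreover the added fibers are what turn the $A_{mn-1}$ points of $\tilde{C}$ on $\tilde{l}_1,\tilde{l}_2$ (produced via Lemma \ref{PerssonsTrick}, c) from the $A_{n-1}$ points of $C$ on $l_1,l_2$) into $2n$ singularities of type $D_{mn+2}$ of $B$, and these are essential to the Picard count below.

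Second, and more seriously, your maximality argument rests on a false principle. Being dominated by (or birational to a quotient of) a product of curves whose Jacobians are of CM type does \emph{not} imply maximal Picard number: for two elliptic curves with CM by non-isomorphic fields, $E_1\times E_2$ has $\rho=2<4=h^{1,1}$ because $H^1(E_1)\otimes H^1(E_2)$ is a CM Hodge structure whose $(1,1)$-part contains no algebraic classes; likewise the degree-$5$ Fermat surface is a quotient of a product of CM-type Fermat curves yet has $\rho=37<45=h^{1,1}$. So the step ``CM type, therefore no transcendental $(1,1)$ classes'' is wrong as stated, and the character-by-character verification you defer is precisely the hard content you would still owe. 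The paper avoids all of this: since $B$ has $2n$ singularities of type $D_{mn+2}$ and $mn$ of type $A_{n-1}$, the canonical model $X$ has the same ADE singularities, and resolving them produces $2n(mn+2)+mn(n-1)$ numerically independent $(-2)$-curves; adding the pullbacks of $\Delta_0$ and $F$ gives $3mn^2+(4-m)n+2$ independent classes, which equals $h^{1,1}(X)=10\chi-K^2-2q$ on the nose (Lemma \ref{PicardSurjectiveMorphism}). No Hodge-theoretic or CM input is needed, and this lattice count is the actual heart of the proof that your proposal replaces with an unproven (and, as a general principle, false) claim.
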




The region of the plane $(K^2, \chi)$ covered by the pairs described in Theorem \ref{main_theorem}, Theorem \ref{main_theorem_1} and Theorem \ref{main_theorem_2} may seem unstructured at a first glance, but paying attention to the slopes $\mu=K^2/\chi$ one can infer the following:

\begin{figure}
        \centering
        \includegraphics[width=\linewidth]{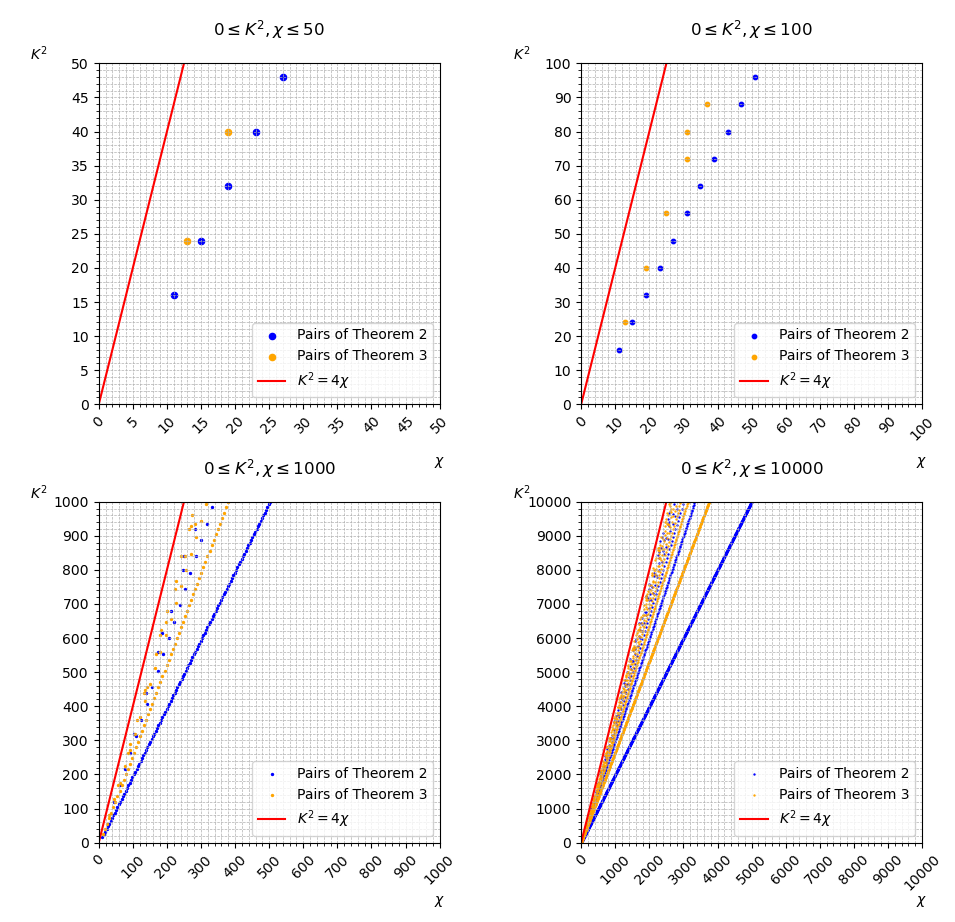}
        \caption{Multi-scale representation of the plane $(K^2,\chi)$ highlighting the pairs described in Theorem \ref{main_theorem_1}, the pairs described in Theorem \ref{main_theorem_2} and the Severi line.}
        \label{fig:linesk2chidouble}
    \end{figure}

\begin{Corollary}\label{sequences_severi} Denoting by $\mu(S)$ the slope $K^2_S/\chi(\mathcal{O}_S)$ of an algebraic surface $S$:
\begin{enumerate}
    \item[i)] For each $n\in 2\cdot \mathbb{Z}_{\geq 1}$ there exists an  unbounded sequence  $\{X^n_m\}_{m\in\mathbb{Z}_{\geq 3}}$ of surfaces of general type with maximal Picard number such that: $$\lim_{m\to\infty}{\mu(X^n_m)} = 4 - \frac{4}{n}.$$
    \item[ii)] For each $m\in \mathbb{Z}_{\geq 3}$ there exists a distinct  unbounded sequence  $\{Y^m_n\}_{n\in2\cdot \mathbb{Z}_{\geq 1}}$ of surfaces of general type with maximal Picard number such that:
    $$\lim_{n\to\infty}{\mu(Y^m_{n})} = 4.$$
\end{enumerate}
\end{Corollary}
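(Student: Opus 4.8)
The plan is to extract both sequences from the single two-parameter family provided by Theorem \ref{main_theorem_1} and to establish the two limits by an elementary computation on quotients of quadratic polynomials. The surfaces themselves, together with the maximality of their Picard number and the fact that they are canonical models in the prescribed moduli spaces, are already furnished by Theorem \ref{main_theorem_1}; nothing further needs to be constructed here.

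For part i), I would fix an even integer $n \in 2\cdot\mathbb{Z}_{\geq 1}$ and set $X^n_m := X_{m,n}$ for $m \in \mathbb{Z}_{\geq 3}$, where $X_{m,n}$ is the canonical model of Theorem \ref{main_theorem_1}. Viewing the slope as a quotient of linear polynomials in $m$,
\[
\mu(X^n_m) = \frac{4mn^2 - 4(m+2)n + 8}{mn^2 - n + 1} = \frac{(4n^2 - 4n)\,m + (8 - 8n)}{n^2\, m + (1 - n)},
\]
the ratio of leading coefficients in $m$ yields $\lim_{m\to\infty}\mu(X^n_m) = (4n^2 - 4n)/n^2 = 4 - \frac{4}{n}$. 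Unboundedness of the sequence follows at once from $\chi = mn^2 - n + 1 \to \infty$ as $m \to \infty$. For part ii) I would instead fix $m \in \mathbb{Z}_{\geq 3}$ and set $Y^m_n := X_{m,n}$ for $n \in 2\cdot\mathbb{Z}_{\geq 1}$, now regarding the slope as a quotient of quadratics in $n$. The ratio of the $n^2$-coefficients gives $\lim_{n\to\infty}\mu(Y^m_n) = 4m/m = 4$, the slope of the Severi line, and again $\chi = mn^2 - n + 1 \to \infty$ as $n \to \infty$ makes the sequence unbounded. For the distinctness asserted in part ii), I would note that for any fixed $n$ the value $\chi = mn^2 - n + 1$ is strictly increasing in $m$; hence $Y^m_n$ and $Y^{m'}_n$ have different holomorphic Euler characteristics, so the sequences $\{Y^{m}_n\}_n$ and $\{Y^{m'}_n\}_n$ consist of pairwise distinct surfaces whenever $m \neq m'$.

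The computation presents no genuine obstacle, since the mathematical substance of the statement resides entirely in Theorem \ref{main_theorem_1}. The only point deserving a moment's care is organizational: the same family $\{X_{m,n}\}$ must be read in two qualitatively different ways, one sequence (fixing $n$, growing $m$) accumulating at the sub-Severi slopes $4 - \frac{4}{n}$ and the other (fixing $m$, growing $n$) accumulating at the Severi line itself, while in both directions one checks that letting the free parameter tend to infinity forces the invariants to grow without bound.
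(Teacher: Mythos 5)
Your proposal is correct and follows essentially the same route as the paper: both take the two-parameter family $X_{m,n}$ of Theorem \ref{main_theorem_1}, set $X^n_m=Y^m_n=X_{m,n}$, and read off the two limits from the explicit slope $\mu(X_{m,n})=4+\frac{4(1-n-mn)}{mn^2-n+1}$ (your ratio-of-leading-coefficients computation is just a reparsing of the same identity). You are slightly more explicit than the paper about unboundedness and the distinctness of the sequences for different $m$, which the paper leaves as ``easily follows,'' but the substance is identical.
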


The inequality  $K^2\geq 4\chi$ is a necessary condition for a minimal smooth complex projective surface to have maximal Albanese dimension. Although this fact was not fully proven until the 2000s by Pardini \cite{MR2125737}, $K^2\geq 4\chi$ (resp. $K^2= 4\chi$)  is known as the Severi inequality (resp. line) because Francesco Severi \cite{MR1509461} stated and  gave an incorrect proof of the aforementioned result in the 1930s.

It follows from Corollary \ref{sequences_severi}, $ii)$ that
     there exist countably many sequences of surfaces of general type with maximal Picard number that converge to the Severi line. While Persson's constructions \cite[Theorem 3]{MR661198} also give rise to a sequence $\{S_n\}$ of surfaces of general type with maximal Picard number that converges to the Severi line, our sequences are not only disjoint from $\{S_n\}$ (see Remark \ref{disjoint_constructions} below) but there are infinitely many such sequences.

     The remainder of this paper is structured as follows. Section \ref{sect:covers}  contains the basics on abelian covers
     that will be needed throughout this article. Section \ref{sect:Picard} provides an overview of key facts regarding the Picard number of algebraic surfaces that are relevant to our discussion. Finally, in Section \ref{sect:maximalPicard} we  construct families of surfaces of general type with maximal Picard number
     that will allow to prove Theorem \ref{main_theorem}, Theorem \ref{main_theorem_1} and Theorem \ref{main_theorem_2}.
     
\medskip

\textbf{Notation and conventions.} 
Throughout this paper, we assume the ground field to be
 the field of  complex numbers $\mathbb{C}$ and
all varieties to be algebraic and  projective.  We will use $\cong$ to indicate linear equivalence of line bundles or divisors, while $\equiv$ will represent congruence modulo a given integer.  The Hirzebruch surface $\mathbb{P}(\mathcal{O}_{\mathbb{P}^1}\oplus \mathcal{O}_{\mathbb{P}^1}(-e))$ will be referred to as $\mathbb{F}_e$. For simplicity, the negative section $\Delta_0$ and a general fiber $F$ of $\mathbb{F}_e$ will just mean two intersecting fibers if $e=0$. Finally, given an integer  $n$, we will denote by $\mathbb{Z}_{\geq n}$ the set of integers bigger than or equal to $n$.
The rest of the notation follows conventional usage in algebraic geometry.

\section{Abelian covers}
\label{sect:covers}


Given a finite abelian group $G$, a finite map $f\colon X\to Y$ in conjunction with a faithful action of the group 
$G$ on $X$ such that $f$ allows $Y$ to be expressed as the quotient $X/G$, is known as a $G$-cover.
The general scenario was first explored by Pardini \cite{MR1103912}, but here we will limit our discussion to
$\mathbb{Z}_2$-covers and $\mathbb{Z}_2^2$-covers, also known as double covers and bidouble covers respectively. A key result about cyclic covers (which are $G$-covers where $G$ is a cyclic group and therefore include $\mathbb{Z}_2$-covers) of Hirzebruch surfaces will also be presented.

Comessatti \cite{comessatti1930sulle} had already studied cyclic covers of surfaces, but \cite{MR527234} is a good reference for the specific case  $G=\mathbb{Z}_2$.
The structure theorem for smooth $\mathbb{Z}_2^2$-covers was established by Catanese \cite{MR755236}.
\subsection{Double covers}
According to \cite[Section II]{MR527234} or \cite[Theorem 2.1]{MR1103912}, the building data $\{L, B\}$ suffices to define a normal $\mathbb{Z}_2$-cover $X \to Y$ of a smooth and irreducible projective variety $Y$, where:
\begin{itemize}
    \item The branch locus $B$ is a reduced and effective divisor on $Y$.
    \item $L$ is a non-trivial line bundle on $Y$ such that $L\cong 2B$.
\end{itemize}

\begin{Proposition}[{{\cite[Section II]{MR527234}}} or {{\cite[Proposition 4.2]{MR1103912}}}]\label{Z2Formulas}
Let $Y$ be a smooth surface and $f\colon X\to Y$ a smooth $\mathbb{Z}_2$-cover with building data 
	$\{L,B\}$. Then:
	\begin{equation*}
		\begin{split}
			K_X & \cong f^*(K_Y+B),\\
			K_X^2 & =2(K_Y+B)^2,\\
			p_g(X) & =p_g(Y)+h^0(K_Y+L),\\
			\chi(\mathcal{O}_X) & =2\chi(\mathcal{O}_Y)+\frac{1}{2}L(L+K_Y).
		\end{split}
	\end{equation*}
\end{Proposition}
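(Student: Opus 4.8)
The plan is to deduce all four identities from a single structural input: since $f\colon X\to Y$ is a smooth $\mathbb{Z}_2$-cover, the induced involution decomposes the direct image of the structure sheaf into its $(+1)$- and $(-1)$-eigensheaves, yielding $f_{*}\mathcal{O}_X\cong\mathcal{O}_Y\oplus L^{-1}$. I would begin by recording that $f$ is a finite flat morphism of degree $2$, so that $R^{i}f_{*}\mathcal{F}=0$ for every $i>0$ and every coherent sheaf $\mathcal{F}$ on $X$; consequently the cohomology of sheaves on $X$ may be computed on $Y$ after pushing forward, and Euler characteristics satisfy $\chi(X,\mathcal{F})=\chi(Y,f_{*}\mathcal{F})$. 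The smoothness hypothesis on $X$ is what guarantees that $B$ is itself smooth, so that the cover is described by the building data with no correction terms; this is essentially the only place where smoothness is genuinely used.

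For the canonical bundle I would invoke the Hurwitz (ramification) formula $K_X\cong f^{*}K_Y+R$, where $R$ denotes the reduced ramification divisor of $f$. Because $f$ is branched along $B$, the ramification divisor satisfies $R\cong f^{*}B$, which gives the first identity $K_X\cong f^{*}(K_Y+B)$. The second identity is then immediate from the projection formula together with $\deg f=2$: intersection numbers are multiplied by the degree under $f^{*}$, so that $K_X^{2}=\bigl(f^{*}(K_Y+B)\bigr)^{2}=2(K_Y+B)^{2}$.

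The two cohomological identities follow by combining the decomposition with the projection formula and the Riemann--Roch theorem on $Y$. For the geometric genus I would write $p_g(X)=h^{0}(X,K_X)=h^{0}(Y,f_{*}\mathcal{O}_X(K_X))$ and apply the projection formula to obtain $f_{*}\mathcal{O}_X(K_X)\cong\mathcal{O}_Y(K_Y+L)\oplus\mathcal{O}_Y(K_Y)$; taking global sections yields $p_g(X)=h^{0}(K_Y+L)+p_g(Y)$. For the holomorphic Euler characteristic I would use additivity under the finite pushforward, $\chi(\mathcal{O}_X)=\chi(\mathcal{O}_Y)+\chi(L^{-1})$, and then apply Riemann--Roch on the smooth surface $Y$ to the line bundle $L^{-1}$, namely $\chi(L^{-1})=\chi(\mathcal{O}_Y)+\tfrac{1}{2}L^{-1}(L^{-1}-K_Y)=\chi(\mathcal{O}_Y)+\tfrac{1}{2}L(L+K_Y)$, which produces the last identity.

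The argument is entirely formal once the eigensheaf decomposition $f_{*}\mathcal{O}_X\cong\mathcal{O}_Y\oplus L^{-1}$ is in hand, so I do not expect any serious obstacle; the statement is classical, and I would ultimately defer to the cited references of Barth--Peters--Van de Ven and Pardini. The only point requiring genuine care is the ramification bookkeeping underlying the canonical bundle formula, that is, correctly matching the ramification divisor $R$ with the pullback of the branch data and confirming that smoothness of $X$ removes any need for correction terms in the Hurwitz formula; everything else reduces to the projection formula and Riemann--Roch.
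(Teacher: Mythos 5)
The paper offers no proof of this proposition --- it is quoted from the cited references --- and your overall strategy (the eigensheaf splitting $f_*\mathcal{O}_X\cong\mathcal{O}_Y\oplus L^{-1}$, vanishing of higher direct images for the finite map $f$, the projection formula, and Riemann--Roch on $Y$) is exactly the standard argument behind it. The Euler-characteristic computation and the degree-doubling of intersection numbers under $f^*$ are fine as written.

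There is, however, a genuine error in the one step you yourself single out as delicate: the ramification bookkeeping. For a double cover branched along $B$ the reduced ramification divisor $R$ satisfies $f^*B=2R$, not $R\cong f^*B$; equivalently $R\cong f^*L$ with $2L\cong B$. Hurwitz then gives $K_X\cong f^*K_Y+R\cong f^*(K_Y+L)$ and $K_X^2=2(K_Y+L)^2$, which is the classical formula and the one the paper actually uses (e.g.\ in the proof of Theorem~\ref{main_theorem_2}, where $K_X$ is the pullback of $K_{\mathbb{F}_m}+\tfrac{1}{2}B$). Your claim $R\cong f^*B$ doubles the ramification contribution. Your own $p_g$ computation contradicts it: the identification $f_*\mathcal{O}_X(K_X)\cong\mathcal{O}_Y(K_Y+L)\oplus\mathcal{O}_Y(K_Y)$ is the projection formula applied to $K_X=f^*(K_Y+L)$; had $K_X$ really been $f^*(K_Y+B)$ you would instead obtain $\mathcal{O}_Y(K_Y+2L)\oplus\mathcal{O}_Y(K_Y+L)$ and hence $p_g(X)=h^0(K_Y+2L)+h^0(K_Y+L)$, which is not $p_g(Y)+h^0(K_Y+L)$. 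Be aware that the proposition as printed contains typos --- the compatibility condition should read $2L\cong B$ rather than $L\cong 2B$, and the first two displayed formulas should involve $K_Y+L$ rather than $K_Y+B$ --- so the literal statement you set out to derive is not the correct one, and your Hurwitz step should not be adjusted to land on it.
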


We observe that Proposition \ref{Z2Formulas} applies when $X$ has ADE singularities (see \cite{MR2030225} for details on ADE singularities on curves and surfaces). For a broader statement, the reader is referred to Bauer and Pignatelli's work \cite[Section 2]{MR4278662}.

\subsection{Bidouble covers}
According to \cite[Section 2]{MR755236} or \cite[Theorem 2.1]{MR1103912}, the building data $\{L_i, B_j\}_{i,j\in\{1,2,3\}}$ suffices to define a normal $\mathbb{Z}_2^2$-cover $X \to Y$ of a smooth and irreducible projective variety $Y$, where:
\begin{itemize}
	\item $B_1, B_2, B_3$ are effective divisors on $Y$ such that the branch locus $B=B_1+B_2+B_3$ is reduced.
	\item $L_1, L_2, L_3$ are non-trivial line bundles  on $Y$ satisfying $2L_1 \cong B_2 + B_3$, $2L_2 \cong B_1 + B_3$ and $L_3 \cong L_1 + L_2 - B_3$.
\end{itemize}

\begin{Proposition}[{{\cite[Section 2]{MR755236}}} or {{\cite[Proposition 4.2]{MR1103912}}}]\label{Z22Formulas}
	Let $Y$ be a smooth surface and $f\colon X\to Y$ a smooth $\mathbb{Z}_2^2$-cover with building data 
	$\{L_i,B_j\}_{i,j}$. Then:
	\begin{equation*}
		\begin{split}
			2K_X & \cong f^*(2K_Y+B_1+B_2+B_3),\\
			K_X^2 & =(2K_Y+B_1+B_2+B_3)^2,\\
			p_g(X) & =p_g(Y)+\sum_{i=1}^3h^0(K_Y+L_i),\\
			\chi(\mathcal{O}_X) & =4\chi(\mathcal{O}_Y)+\frac{1}{2}\sum_{i=1}^3L_i(L_i+K_Y).
		\end{split}
	\end{equation*}
\end{Proposition}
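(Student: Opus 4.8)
The plan is to reduce all four identities to a single structural input, the $\mathbb{Z}_2^2$-eigensheaf decomposition of $f_*\mathcal{O}_X$, after which each formula follows from Riemann--Roch, Serre (or relative) duality and the projection formula. The starting point is Catanese's structure theorem, which for a smooth cover gives the splitting $f_*\mathcal{O}_X\cong\mathcal{O}_Y\oplus L_1^{-1}\oplus L_2^{-1}\oplus L_3^{-1}$ into the eigensheaves of the four characters of $\mathbb{Z}_2^2$, the trivial character accounting for the summand $\mathcal{O}_Y$. Here the three line-bundle relations in the building data are precisely the cocycle-type conditions that upgrade the splitting to an $\mathcal{O}_Y$-algebra, so this is the one place where the hypotheses are genuinely used. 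Since $X$ is smooth, $f$ is finite and flat of degree $4$, a fact I would use repeatedly below.

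For the canonical identity I would argue through the ramification divisor. Smoothness of $X$ forces each branch component $B_i$ to be smooth with inertia group $\mathbb{Z}_2$, so that over it the cover is simply ramified; hence the reduced ramification divisor decomposes as $R=R_1+R_2+R_3$ with $f^*B_i\cong 2R_i$, and the ramification formula gives $K_X\cong f^*K_Y+R$. Doubling and substituting yields $2K_X\cong 2f^*K_Y+\sum_{i}f^*B_i=f^*(2K_Y+B_1+B_2+B_3)$, the first formula. Squaring this and using that $f^*D\cdot f^*D'=(\deg f)\,D\cdot D'=4\,D\cdot D'$ for divisors $D,D'$ on $Y$, the factor $4$ on the right cancels the $2^2=4$ coming from $(2K_X)^2=4K_X^2$ on the left, so $K_X^2=(2K_Y+B_1+B_2+B_3)^2$.

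The two cohomological formulas are then read off the decomposition. As $f$ is finite, hence affine, $H^i(X,\mathcal{O}_X)\cong H^i(Y,f_*\mathcal{O}_X)$, so $\chi$ is additive over the summands: $\chi(\mathcal{O}_X)=\chi(\mathcal{O}_Y)+\sum_{i=1}^3\chi(L_i^{-1})$. Surface Riemann--Roch gives $\chi(L_i^{-1})=\chi(\mathcal{O}_Y)+\tfrac12 L_i(L_i+K_Y)$, and summing the four terms produces $\chi(\mathcal{O}_X)=4\chi(\mathcal{O}_Y)+\tfrac12\sum_{i}L_i(L_i+K_Y)$. For $p_g$ the quickest route is relative duality for the finite flat map $f$: dualizing the splitting, $f_*\omega_X\cong\mathcal{H}om_{\mathcal{O}_Y}(f_*\mathcal{O}_X,\omega_Y)\cong\omega_Y\oplus\bigoplus_{i=1}^3(K_Y+L_i)$, whence $p_g(X)=h^0(X,\omega_X)=h^0(Y,f_*\omega_X)=p_g(Y)+\sum_{i=1}^3 h^0(K_Y+L_i)$. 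Equivalently, one computes $h^2(\mathcal{O}_X)$ summand by summand and applies Serre duality $h^2(Y,L_i^{-1})=h^0(Y,K_Y+L_i)$ on $Y$.

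The only genuinely non-formal step is the structural one in the first paragraph: establishing the eigensheaf splitting of $f_*\mathcal{O}_X$ together with the precise shape of the ramification divisor. This is exactly the content of Catanese's structure theorem for bidouble covers (equivalently, a special case of Pardini's abelian-cover formalism), and I would simply invoke it; granting it, the four formulas drop out of Riemann--Roch, duality and the projection formula with no further difficulty.
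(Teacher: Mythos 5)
Your argument is correct and is essentially the standard proof: the paper itself states this proposition without proof, citing Catanese and Pardini, and those references establish it exactly as you do, via the eigensheaf splitting $f_*\mathcal{O}_X\cong\mathcal{O}_Y\oplus L_1^{-1}\oplus L_2^{-1}\oplus L_3^{-1}$, the ramification formula, Riemann--Roch and duality. No gaps; all four identities follow as you describe.
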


We observe that Proposition \ref{Z22Formulas} also applies when $X$ has ADE singularities. Again, the reader is referred to \cite[Section 2]{MR4278662}.


\begin{Corollary}[{\cite[Corollary 1]{MR4761778}}]\label{Z22Sing}
	Let $f\colon X\to Y$ be a $\mathbb{Z}_2^2$-cover of a smooth surface $Y$ with  building data $\{L_i,B_j\}_{i,j}$ and let $Q=f(P)$ be an intersection point of $B_1$ and $B_2$ that is not contained in $B_3$.
	\begin{itemize}
		\item[i)] If both $B_1$ and $B_2$ are smooth at $Q$ but they intersect in such a way that $B_1+B_2$ has a singularity of type $A_{2n+1},n\geq 1$ at $Q$, then  $X$ has a singularity of type $A_n$ at $P$.
		\item[ii)] If $B_1$ has a singularity of type $A_{n}, n\geq 1$ at $Q$, $B_2$ is smooth at $Q$ and they intersect in such a way that $B_1+B_2$ has a singularity of type $D_{n+3}$ at $Q$, then  $X$ has a singularity of type $A_{2n+1}$ at $P$.
	\end{itemize}
\end{Corollary}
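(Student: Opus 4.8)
The plan is to reduce the statement to a purely local computation in analytic (equivalently, formal) coordinates around $Q$ and $P$, where Catanese's structure theorem for $\mathbb{Z}_2^2$-covers yields explicit defining equations, and then to identify the resulting germ with a standard $A_k$ normal form.

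First I would exploit that the claim is local and that $Q\notin B_3$. Let $f_i$ denote a local equation of $B_i$ at $Q$; since $Q\notin B_3$, the function $f_3$ is a unit. Recalling that $X$ is locally cut out, in the total space of $L_1\oplus L_2\oplus L_3$ with fiber coordinates $w_1,w_2,w_3$, by the relations $w_i^2=f_jf_k$ and $w_iw_j=f_kw_k$ (with $i,j,k$ a permutation of $1,2,3$), I would observe that when $f_3$ is a unit the relation $w_1w_2=f_3w_3$ lets us eliminate $w_3=f_3^{-1}w_1w_2$, and a direct check shows the remaining relations collapse to $w_1^2=f_2f_3$, $w_2^2=f_1f_3$. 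Absorbing the unit $f_3$ into $w_1,w_2$ (a unit admits an analytic square root) then identifies the germ $(X,P)$ with the fiber product
\[
\{w_1^2=f_2,\ w_2^2=f_1\}.
\]
This is the key simplification; what remains is a choice of normal forms and a single change of variables.

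For part i), since $B_1,B_2$ are smooth and $B_1+B_2$ is of type $A_{2n+1}$, I would fix analytic coordinates $(u,v)$ on $Y$ at $Q$ realizing $f_1f_2=v^2-u^{2n+2}$ with $f_1=v-u^{n+1}$ and $f_2=v+u^{n+1}$. Substituting into the fiber product and eliminating $v=\tfrac12(w_1^2+w_2^2)$ presents $(X,P)$ as the hypersurface $w_1^2-w_2^2=2u^{n+1}$ in $(u,w_1,w_2)$-space; factoring the left-hand side exhibits this as $ab=2u^{n+1}$, i.e. a singularity of type $A_n$. For part ii), I would instead use coordinates realizing $f_2=x$ (the smooth branch $B_2$) and $f_1=y^2+x^{n+1}$ (the $A_n$ branch $B_1$), so that $f_1f_2=x(y^2+x^{n+1})$ is of type $D_{n+3}$, which a quick Milnor-number count confirms gives $\mu=n+3$. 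Eliminating $x=w_1^2$ then presents $(X,P)$ as $w_2^2-y^2=w_1^{2n+2}$, which factors as $ab=w_1^{2n+2}$, a singularity of type $A_{2n+1}$.

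The main obstacle is justifying the normal-form choices: one must argue that the prescribed analytic types of $B_1$, of $B_2$, and of the union $B_1+B_2$ can be realized simultaneously by a single analytic coordinate change on $Y$, with $f_1,f_2$ taking exactly the stated shapes (in particular, that the two smooth branches of an $A_{2n+1}$ germ meet with contact order $n+1$, and that the branch decomposition of a $D_{n+3}$ germ is precisely a smooth branch together with an $A_n$ branch). This rests on the finite determinacy and classification of ADE curve singularities; once it is in place, the remaining algebra—elimination of one variable followed by a difference-of-squares factorization into the standard form $ab=(\text{coordinate})^{k+1}$ of an $A_k$ singularity—is routine.
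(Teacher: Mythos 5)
Your argument is correct, and it is worth noting that the paper itself gives no proof of this Corollary: it is quoted verbatim from the authors' earlier work \cite[Corollary 1]{MR4761778}, where it is derived from the local analysis of bidouble covers going back to Catanese. Your route is exactly that standard derivation: since $Q\notin B_3$, the unit $f_3$ lets you eliminate $w_3$ from Catanese's local equations $w_1^2=f_2f_3$, $w_2^2=f_1f_3$, $w_3^2=f_1f_2$, $w_iw_j=f_kw_k$, reducing the germ $(X,P)$ to the fibre product $\{w_1^2=f_2,\ w_2^2=f_1\}$, after which the two cases follow from the branch decompositions of the $A_{2n+1}$ germ (two smooth branches with contact order $n+1$) and of the $D_{n+3}$ germ (a smooth branch plus an $A_n$ branch), together with a difference-of-squares change of variables landing on the normal form $ab=c^{k+1}$ of an $A_k$ surface singularity. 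The only point you flag as an obstacle --- realizing the prescribed branch equations simultaneously in one analytic chart --- is indeed the standard uniqueness of the branch decomposition of ADE curve germs (with the harmless $S_3$-ambiguity for $D_4$ when $n=1$), and your identification of $x(y^2+x^{n+1})$ with the $D_{n+3}$ normal form is immediate by swapping coordinates, so the appeal to Milnor numbers is not even needed. In short: correct, self-contained, and consistent with the cited source; there is no gap.
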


\begin{Remark}\label{DoubleSing}
It is worth noting that:
\begin{enumerate}
    \item[i)]	 If $n=0$ in Corollary \ref{Z22Sing}, i) then $X$ is smooth at $P$.
    \item[ii)]	 An ADE singularity of $B_i,i\in\{1,2,3\}$ that is disjoint from the two other divisors of the branch locus induces two singularities of the same type on $X$.
\end{enumerate}
\end{Remark}

For more information about the singularities arising from bidouble covers  the reader is addressed to \cite{MR879190} or  \cite{MR1103912}.

\subsection{Cyclic covers of Hirzebruch surfaces}

The following result is a restatement of \cite[Lemma 4.1]{MR661198} that will be essential to prove Theorem \ref{main_theorem_1} and Theorem \ref{main_theorem_2}.

\begin{Lemma}[{\cite[Lemma 4.1]{MR661198}} or {\cite[Lemma 2]{MR4761778}}]\label{PerssonsTrick}
	Given a Hirzebruch surface $\mathbb{F}_e$ with two disjoint fibers $F_1,F_2$ there is for
	each integer $d$ a unique $\mathbb{Z}_d$-cover $\pi_d\colon \mathbb{F}_{de} \to \mathbb{F}_e$ branched at $F_1 $ and $ F_2$. Furthermore:
	\begin{itemize}
		\item[a)] If $C\cong \mathcal{O}_{\mathbb{F}_e}(a\Delta_0+b F)$ is an effective divisor not having $F_1$ and $F_2$ as components then $C_d:=\pi_d^*C\cong \mathcal{O}_{\mathbb{F}_{de}}(a\Delta_0+dbF)$.
		\item[b)] If $C$ has at most ADE singularities, then $C_d$ has at most ADE singularities if and only if the only singularities of
		$C$ lying on $F_1$ or $F_2$ are of type $A_n$ and they are transversal to these fibers.
		\item[c)] Given $n\in2\cdot \mathbb{Z}_{\geq 1}$ a singularity of type $A_{n-1}$
  of $C$ on $F_i,i\in\{1,2\}$ that is transversal to this fiber gives rise to a singularity of type 
  $A_{dn-1}$
  of $C_d$. 
		\item[d)] A singularity of $C$ not lying on $F_1\cup F_2$ gives rise to $d$ singularities of the same type on $C_d$.
	\end{itemize}
\end{Lemma}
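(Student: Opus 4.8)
The plan is to realize $\pi_d$ as a base change of a cyclic cover of the base $\mathbb{P}^1$. Writing $\pi\colon\mathbb{F}_e\to\mathbb{P}^1$ for the bundle projection and $p_i=\pi(F_i)$, I would first take the unique connected $\mathbb{Z}_d$-cover $g\colon\mathbb{P}^1\to\mathbb{P}^1$ totally branched over $p_1$ and $p_2$ (in suitable coordinates the map $z\mapsto z^d$), and then set $\mathbb{F}_{de}:=\mathbb{F}_e\times_{\mathbb{P}^1}\mathbb{P}^1$ with $\pi_d$ the projection onto the first factor. Since $g^*\mathcal{O}_{\mathbb{P}^1}(-e)\cong\mathcal{O}_{\mathbb{P}^1}(-de)$, the fibre product is $\mathbb{P}(\mathcal{O}\oplus\mathcal{O}(-de))$, which identifies the total space with $\mathbb{F}_{de}$ and exhibits $\pi_d$ as branched exactly over $F_1\cup F_2$. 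For uniqueness I would remove the two fibres and observe that the result is a $\mathbb{P}^1$-bundle over $\mathbb{P}^1\setminus\{p_1,p_2\}$, so that $\pi_1(\mathbb{F}_e\setminus(F_1\cup F_2))\cong\pi_1(\mathbb{P}^1\setminus\{p_1,p_2\})\cong\mathbb{Z}$; a connected $\mathbb{Z}_d$-cover then corresponds to a surjection $\mathbb{Z}\twoheadrightarrow\mathbb{Z}_d$, unique up to an automorphism of $\mathbb{Z}_d$, forcing $\pi_d$ to be the base change just constructed.

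For part a) I would compute the pullback of divisor classes directly from the base-change description. The negative section meets the ramification locus $F_1\cup F_2$ only at the two points $\Delta_0\cap F_i$, so $\pi_d$ is étale along $\Delta_0$ away from these and $\pi_d^*\Delta_0\cong\Delta_0$. On the other hand a general fibre satisfies $F=\pi^*(\mathrm{pt})$, whence $\pi_d^*F=\pi_{de}^*g^*(\mathrm{pt})\cong d\,F$ because $g$ has degree $d$. Combining these two computations gives $C_d=\pi_d^*C\cong\mathcal{O}_{\mathbb{F}_{de}}(a\Delta_0+db\,F)$, as claimed, the hypothesis that $F_1,F_2$ are not components of $C$ guaranteeing that no fibre component is created or absorbed in the process.

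Parts b), c), d) are local in nature. Away from $F_1\cup F_2$ the map $\pi_d$ is unramified of degree $d$, hence a local analytic isomorphism at each of the $d$ preimages of a point, which immediately yields d). Near a point of $F_i$ I would choose coordinates $(w,v)$ downstairs with $F_i=\{w=0\}$ and $v$ along the fibre, and coordinates $(u,v)$ upstairs, so that $\pi_d$ takes the normal form $(u,v)\mapsto(u^d,v)=(w,v)$. An $A_{n-1}$ singularity with $n$ even that is transversal to the fibre has local equation $v^2=w^n$, its two smooth branches $v=\pm w^{n/2}$ crossing $\{w=0\}$ transversally; pulling back through $w=u^d$ gives $v^2=u^{dn}$, that is an $A_{dn-1}$ singularity. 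This proves c) and, together with d), the ``if'' direction of b).

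The hard part will be the ``only if'' direction of b). Here I would run the same substitution $w=u^d$ on the remaining possible configurations of $C$ along $F_i$ and verify that the pullback leaves the ADE list. A branch tangent to the fibre produces a local equation of type $u^d=v^k$ with $k\geq 3$, which is non-simple for $d$ large, while a $D$- or $E$-type singularity seated on $F_i$ acquires after the substitution a tangent cone and multiplicity sequence incompatible with the simple singularities. The delicate point is precisely the bookkeeping of these borderline cases against the classification of ADE plane curve singularities; once the local normal forms are pinned down, everything reduces to the two monomial computations $v^2=w^n\mapsto v^2=u^{dn}$ and $u^d=v^k$ carried out above.
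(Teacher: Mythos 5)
The paper itself offers no proof of Lemma~\ref{PerssonsTrick}: it is quoted from Persson's \cite[Lemma 4.1]{MR661198} (equivalently \cite[Lemma 2]{MR4761778}), and the only related material is the Remark that follows, where the authors mention having written $\pi_d$ out in coordinates --- essentially the local computation you perform. So there is nothing in the paper to compare against line by line, and your reconstruction is the standard one. The base-change construction $\mathbb{F}_{de}=\mathbb{F}_e\times_{\mathbb{P}^1}\mathbb{P}^1$ over $z\mapsto z^d$, the $\pi_1$-argument for uniqueness, part a) (add one line identifying $\pi_d^{-1}(\Delta_0)$ with the negative section of $\mathbb{F}_{de}$, e.g.\ because it is an irreducible section of self-intersection $d\cdot\Delta_0^2=-de$), and part d) are all correct. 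In c) the point to tighten is that the normal form $v^2=w^n$ must be reached by coordinate changes compatible with the ruling --- only reparametrizations of the base coordinate $w$ and fibrewise changes of $v$ are allowed, since the substitution $w=u^d$ takes place in the base coordinate. For $n$ even this is harmless: the two smooth branches are graphs $v=a(w)$, $v=b(w)$ with $\operatorname{ord}(a-b)=n/2$, and pulling back multiplies their contact order by $d$, giving $A_{dn-1}$; this is also precisely where the parity hypothesis enters.

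The genuine gap is the ``only if'' direction of b), which you acknowledge but do not close, and whose difficulty you slightly misdiagnose. Your sketch shows that a branch tangent to the fibre yields $u^d=v^k$, ``non-simple for $d$ large''; but the lemma is a statement for each fixed $d$, and for small $d$ the implication is in fact false in a literal reading: an $A_2$-point of $C$ on $F_i$ tangent to the fibre has local equation $w^2=v^3$ and pulls back under $d=2$ to $u^4=v^3$, an $E_6$-singularity, which is ADE. So either Persson's statement carries an implicit quantifier over all $d$ or additional transversality conventions, and no amount of monomial bookkeeping of the kind you propose will prove the stated biconditional for every fixed $d$. Relatedly, even your ``if'' direction is incomplete as written: a \emph{smooth} point of $C$ on $F_i$ with contact order $k\geq 3$ is not excluded by the hypothesis as phrased (it is not a singularity of $C$), yet it pulls back to $u^d=v^k$, which is not ADE for $d,k\geq 3$; one must read ``transversal'' as applying to all of $C\cap F_i$. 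None of this affects the paper, which only invokes the ``if'' direction together with c) and d) in situations where $C$ meets the fibres transversally everywhere, but as a proof of b) as stated your argument is not yet complete.
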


\begin{Remark}
The authors suspect that Lemma \ref{PerssonsTrick}, c) remains valid when $n \in \mathbb{Z}_{\geq 2}$ is odd, despite this case not being addressed in Persson's original statement \cite[Lemma 4.1, c)]{MR661198}.
 To support this, they explicitly wrote $\pi_d$ using the description of Hirzebruch surfaces provided in \cite[Chapter 2]{MR1442522}. Then, they studied whether the pullback of a singularity of type $A_{n-1}$ of $C$ on $F_i,i\in\{1,2\}$ that is transversal to this fiber gives rise to a singularity of type 
  $A_{dn-1}$
  of $C_d$. If confirmed, this claim could broaden the scope of Theorem \ref{main_theorem_1} and Theorem \ref{main_theorem_2}. However, the authors have chosen not to include it as a precautionary measure.
\end{Remark}

\section{On the Picard Number of an algebraic surface}\label{sect:Picard}

Determining the Picard number of a smooth projective surface can be challenging in general. However, there is a relatively straightforward method to demonstrate that a smooth projective surface $X$ has maximal Picard number. 

Indeed, on the one hand, the number of algebraically independent divisor classes in $NS(X)$ that one is able to identify is a lower bound on $\rho(X)$.
    On the other hand,  $h^{1,1}(X)=10\chi(\mathcal{O}_X)-K^2_X-2q(X)$
is an upper bound on $\rho(X)$.

Hence, if we are able to find $h^{1,1}(X)$ divisors on  $X$ whose intersection matrix has rank $h^{1,1}(X)$,  then $X$ has maximal Picard number. See \cite[Section 2]{MR2855811} for other approaches.

From the discussion above one can easily infer the following:

\begin{Lemma}[{\cite[Lemma 1]{MR4761778}}]   \label{PicardSurjectiveMorphism}
	Let $X$ be a canonical model with minimal resolution $\hat{X}\to X$ and whose singular set consists of:
	\begin{itemize}
		\item $\alpha_i$ singularities of type $A_i, i\in\mathbb{Z}_{\geq 1}$,
		\item $\beta_j$ singularities of type $D_j, j\in\mathbb{Z}_{\geq 4}$,
		\item $\gamma_k$ singularities of type $E_k, k\in\{6,7,8\}$.
	\end{itemize}
	Let us suppose that $X$ admits a surjective morphism $\pi:X\to Y$ onto a smooth and projective surface $Y$ and there exist numerically independent divisors $C_1,\ldots, C_n$ on $Y$.
	Then:
	\begin{equation*}
		\rho(\hat{X})\geq \sum_{i} i\cdot\alpha_i + 
		\sum_{j} j\cdot\beta_j+\sum_{k} k\cdot\gamma_k+n.
	\end{equation*}
\end{Lemma}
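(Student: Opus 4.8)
The plan is to bound the Picard number of $\hat X$ from below by exhibiting enough numerically independent divisor classes. The strategy is to collect contributions from two disjoint sources: the exceptional divisors produced by resolving the ADE singularities of $X$, and the pullbacks of the divisors on $Y$ coming through the morphism $\pi$. First I would recall that the resolution $\hat X\to X$ replaces each ADE singularity by a bouquet of $(-2)$-curves whose dual graph is exactly the Dynkin diagram of the corresponding type. Thus a singularity of type $A_i$ contributes $i$ exceptional curves, one of type $D_j$ contributes $j$, and one of type $E_k$ contributes $k$. Summing over the singular set gives a total of $\sum_i i\cdot\alpha_i+\sum_j j\cdot\beta_j+\sum_k k\cdot\gamma_k$ exceptional curves.

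Next I would argue that all these exceptional curves are numerically independent in $NS(\hat X)$. The key point is that the intersection form restricted to the exceptional locus of a single ADE singularity is, up to sign, the Cartan matrix of the associated root system, which is negative definite and in particular nonsingular. Since the exceptional configurations over distinct singular points are pairwise disjoint, the full intersection matrix of the entire collection of exceptional curves is block-diagonal with negative-definite blocks, hence itself negative definite and of full rank. This establishes that these $\sum i\,\alpha_i+\sum j\,\beta_j+\sum k\,\gamma_k$ classes are linearly (and therefore numerically) independent.

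Then I would bring in the morphism $\pi\colon X\to Y$. Composing with the resolution gives a surjective morphism $\hat\pi\colon\hat X\to Y$, whose pullback $\hat\pi^*\colon NS(Y)\to NS(\hat X)$ is injective on numerical classes because $\hat\pi$ is surjective (the projection formula shows $\hat\pi^*$ cannot kill a class with nonzero intersection against some curve). Hence the pullbacks $\hat\pi^*C_1,\dots,\hat\pi^*C_n$ of the numerically independent divisors $C_1,\dots,C_n$ on $Y$ remain numerically independent on $\hat X$. Crucially, these pulled-back classes are supported on divisors dominating $Y$, whereas the exceptional curves are contracted by $\hat\pi$; this geometric separation is what lets the two families be combined.

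The main obstacle, and the step I would treat most carefully, is verifying that the two families together are numerically independent, i.e. that no nontrivial combination of exceptional classes equals a combination of pullback classes. Here I would use the contraction: any class pulled back from $Y$ intersects trivially with every $\hat\pi$-exceptional curve, while the exceptional classes among themselves pair via the negative-definite block matrix described above. A relation between the two families would force a self-intersection computation that the negative-definiteness rules out, so the combined intersection matrix is nonsingular. Counting the independent classes yields $\rho(\hat X)\geq\sum_i i\cdot\alpha_i+\sum_j j\cdot\beta_j+\sum_k k\cdot\gamma_k+n$, as claimed.
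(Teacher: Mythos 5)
Your proposal is correct and follows essentially the same route as the paper, which does not reprove the lemma but cites it from an earlier work and sketches exactly this argument in Section \ref{sect:Picard}: the exceptional $(-2)$-curves over the ADE points contribute a negative-definite (hence full-rank) block of $\sum_i i\alpha_i+\sum_j j\beta_j+\sum_k k\gamma_k$ classes, and the pullbacks of $C_1,\dots,C_n$ under the composite $\hat X\to X\to Y$ contribute $n$ further classes orthogonal to that block. One small caution: rather than claiming the full combined Gram matrix is nonsingular (which could fail if the Gram matrix of the $C_a$ on $Y$ is degenerate even though the classes are numerically independent), it is cleaner to argue as you also do via orthogonality plus injectivity of $\hat\pi^*$ on numerical classes, which yields the independence of the union directly.
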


\begin{Remark}
	 Lemma \ref{PicardSurjectiveMorphism} will just be applied in the following scenarios:
	\begin{itemize}
		\item When $\pi\colon X\to \mathbb{P}^2$   is a $\mathbb{Z}_2^2$-cover of the projective plane $\mathbb{P}^2$, $n=1$ and $C_1$ is a line.
		\item When $\pi\colon X\to \mathbb{F}_e$ is either a $\mathbb{Z}_2$-cover or a $\mathbb{Z}_2^2$-cover of the Hirzebruch surface $\mathbb{F}_e$ for some integer $e \geq 0$, $n=2$ and the divisors $C_1,C_2$ are the negative section and a fiber.
	\end{itemize}
\end{Remark} 

\section{Surfaces with maximal Picard number}\label{sect:maximalPicard}
This section is devoted to construct families of surfaces of general type with maximal Picard number from which Theorem \ref{main_theorem}, Theorem \ref{main_theorem_1} and Theorem \ref{main_theorem_2} will follow. Given that all these surfaces are obtained as abelian covers whose branch locus derives from the curve
 \begin{equation*}
C=[(X_0^n+X_1^n+X_2^n)^2-4\cdot ((X_0X_1)^n+(X_0X_2)^n+(X_1X_2)^n)=0]
 \end{equation*}
 on $\mathbb{P}^2=\text{Proj}(\mathbb{C}[X_0,X_1,X_2])$, we will begin by establishing the common setup and notation.

 Denoting $l_1 = [X_0 = 0]$, $l_2 = [X_1 = 0]$ and $l_3 = [X_2 = 0]$, the curve $C$ has $n$ singularities of type $A_{n-1}$ on $l_1$, $n$ singularities of type $A_{n-1}$ on $l_2$ and $n$ singularities of type $A_{n-1}$ on $l_3$ by \cite[Lemma 7.5]{MR661198}.\\

\begin{figure}[H]
\begin{center}
\tikzset{every picture/.style={line width=0.75pt}} 

\begin{tikzpicture}[x=0.75pt,y=0.75pt,yscale=-1,xscale=1]

\draw [color={rgb, 255:red, 0; green, 0; blue, 0 }  ,draw opacity=1 ]   (307,31.6) -- (133,222.6) ;
\draw [color={rgb, 255:red, 0; green, 0; blue, 0 }  ,draw opacity=1 ]   (128,204.6) -- (486,202.6) ;
\draw [color={rgb, 255:red, 0; green, 0; blue, 0 }  ,draw opacity=1 ]   (268,35.6) -- (478,225.6) ;
\draw  [fill={rgb, 255:red, 74; green, 144; blue, 226 }  ,fill opacity=1 ] (284,54) .. controls (284,52.34) and (285.34,51) .. (287,51) .. controls (288.66,51) and (290,52.34) .. (290,54) .. controls (290,55.66) and (288.66,57) .. (287,57) .. controls (285.34,57) and (284,55.66) .. (284,54) -- cycle ;
\draw  [color={rgb, 255:red, 0; green, 0; blue, 0 }  ,draw opacity=1 ][fill={rgb, 255:red, 0; green, 0; blue, 0 }  ,fill opacity=1 ] (450,203) .. controls (450,201.34) and (451.34,200) .. (453,200) .. controls (454.66,200) and (456,201.34) .. (456,203) .. controls (456,204.66) and (454.66,206) .. (453,206) .. controls (451.34,206) and (450,204.66) .. (450,203) -- cycle ;
\draw  [color={rgb, 255:red, 0; green, 0; blue, 0 }  ,draw opacity=1 ][fill={rgb, 255:red, 0; green, 0; blue, 0 }  ,fill opacity=1 ] (147,204) .. controls (147,202.34) and (148.34,201) .. (150,201) .. controls (151.66,201) and (153,202.34) .. (153,204) .. controls (153,205.66) and (151.66,207) .. (150,207) .. controls (148.34,207) and (147,205.66) .. (147,204) -- cycle ;
\draw   (163.68,193.31) .. controls (161.26,187.7) and (161.54,179.14) .. (170.32,169.55) .. controls (187.88,150.38) and (204.32,165.44) .. (200.27,169.86) .. controls (196.21,174.29) and (179.78,159.23) .. (197.34,140.06) .. controls (214.9,120.89) and (231.34,135.94) .. (227.29,140.37) .. controls (223.23,144.79) and (206.79,129.73) .. (224.36,110.56) .. controls (241.92,91.39) and (258.36,106.45) .. (254.3,110.87) .. controls (250.25,115.3) and (233.81,100.24) .. (251.37,81.07) .. controls (268.94,61.89) and (285.38,76.95) .. (281.32,81.38) .. controls (278.1,84.89) and (267.05,76.1) .. (271.48,62.71) ;
\draw   (312.43,80.46) .. controls (318.02,77.98) and (326.58,78.18) .. (336.25,86.86) .. controls (355.6,104.23) and (340.7,120.82) .. (336.24,116.81) .. controls (331.77,112.8) and (346.67,96.22) .. (366.01,113.59) .. controls (385.36,130.96) and (370.46,147.55) .. (366,143.54) .. controls (361.54,139.53) and (376.43,122.94) .. (395.78,140.31) .. controls (415.12,157.68) and (400.23,174.27) .. (395.76,170.26) .. controls (391.3,166.25) and (406.19,149.66) .. (425.54,167.03) .. controls (444.88,184.4) and (429.99,200.99) .. (425.53,196.98) .. controls (421.78,193.62) and (431.65,181.42) .. (446.33,187.8) ;
\draw   (217.88,208.42) .. controls (220.38,202.84) and (226.87,197.26) .. (239.87,197.24) .. controls (265.87,197.21) and (265.9,219.5) .. (259.9,219.51) .. controls (253.9,219.52) and (253.87,197.23) .. (279.87,197.19) .. controls (305.87,197.16) and (305.9,219.46) .. (299.9,219.46) .. controls (293.9,219.47) and (293.87,197.18) .. (319.87,197.15) .. controls (345.87,197.11) and (345.9,219.41) .. (339.9,219.41) .. controls (333.9,219.42) and (333.87,197.13) .. (359.87,197.1) .. controls (385.87,197.06) and (385.9,219.36) .. (379.9,219.37) .. controls (374.45,219.37) and (373.93,200.97) .. (393.37,197.58) ;

\draw (404,124) node [anchor=north west][inner sep=0.75pt]  [color={rgb, 255:red, 0; green, 0; blue, 0 }  ,opacity=1 ] [align=left] {$C$};
\draw (294,46) node [anchor=north west][inner sep=0.75pt]  [color={rgb, 255:red, 74; green, 144; blue, 226 }  ,opacity=1 ] [align=left] {$P$};
\draw (249,180) node [anchor=north west][inner sep=0.75pt]  [color={rgb, 255:red, 0; green, 0; blue, 0 }  ,opacity=1 ] [align=left] {$A_{n-1}$};
\draw (239,122) node [anchor=north west][inner sep=0.75pt]  [color={rgb, 255:red, 0; green, 0; blue, 0 }  ,opacity=1 ] [align=left] {$l_1$};
\draw (353,78) node [anchor=north west][inner sep=0.75pt]  [color={rgb, 255:red, 0; green, 0; blue, 0 }  ,opacity=1 ] [align=left] {$A_{n-1}$};
\draw (200,82) node [anchor=north west][inner sep=0.75pt]  [color={rgb, 255:red, 0; green, 0; blue, 0 }  ,opacity=1 ] [align=left] {$A_{n-1}$};
\draw (194,219) node [anchor=north west][inner sep=0.75pt]  [color={rgb, 255:red, 0; green, 0; blue, 0 }  ,opacity=1 ] [align=left] {$l_3$};
\draw (337,122) node [anchor=north west][inner sep=0.75pt]  [color={rgb, 255:red, 0; green, 0; blue, 0 }  ,opacity=1 ] [align=left] {$l_2$};
\end{tikzpicture}
\caption{Schematic depiction of curve $C$.}
\end{center}
\end{figure}
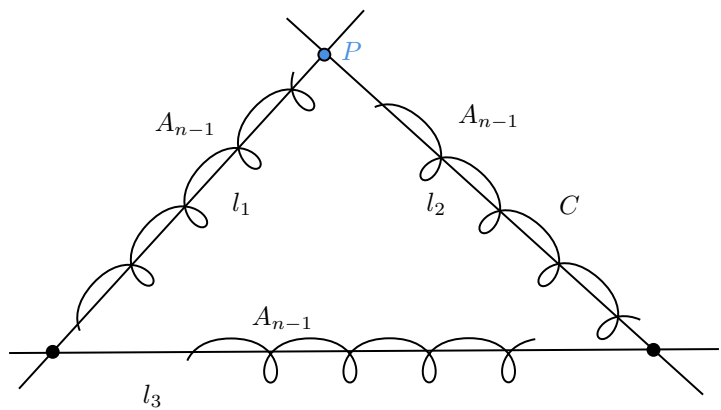

\noindent
Let $ \pi\colon \mathbb{F}_1 \to \mathbb{P}^2 $ be the blow-up of $ \mathbb{P}^2 $ at $ P $ where  $P$ is the intersection of $l_1$ and $l_2$. We denote by:
	\begin{center}
		\begin{tabular}{l l}
			$ \overline{l}_3, \overline{C}$: & the pull-back via $\pi$ of  $ l_3, C$  respectively,\\
			$ l'_1, l'_2 $:&the strict transforms via $\pi$ of $ l_1 $, $ l_2 $ respectively.
		\end{tabular}	
	\end{center}
Note that 
    $l'_1\cong l'_2 \cong \mathcal{O}_{\mathbb{F}_1}(F)$, $\overline{l}_3\cong \mathcal{O}_{\mathbb{F}_1}(\Delta_0+F)$, and $\overline{C}\cong \mathcal{O}_{\mathbb{F}_1}(2n\Delta_0+2nF)$.\\

Let $\psi\colon \mathbb{F}_{m} \to \mathbb{F}_1  $ be the $\mathbb{Z}_{m}$-cover branched at $ l'_1 + l'_2 $ (see Lemma \ref{PerssonsTrick}).\\

Firstly, since $\overline{l}_3\cong \mathcal{O}_{\mathbb{F}_1}(\Delta_0+F)$  and $\overline{C}\cong \mathcal{O}_{\mathbb{F}_1}(2n\Delta_0+2nF)$ are effective divisors not having $l'_1 $ and $ l'_2$ as components,  then
    $\tilde{l}_3 :=\psi^{*}\left( \overline{l}_3\right) \in |\mathcal{O}_{\mathbb{F}_{m}}(\Delta_0 + mF)|$ and $\tilde{C} :=\psi^{*}\left( \overline{C}\right) \in |\mathcal{O}_{\mathbb{F}_{m}}(2n\Delta_0 + 2nmF)|$
   by Lemma \ref{PerssonsTrick}, where $\psi^*$ is the pull-back of $\psi$.\\

Secondly, being $\psi$   branched at $ l'_1 + l'_2 $, we obtain that    
    \begin{equation*}
		\begin{split}			
			\psi^{*}\left( l'_1\right) & =m\tilde{l}_1,\\
			\psi^{*}\left( l'_2\right) & =m\tilde{l}_2,
		\end{split}
	\end{equation*}
    
    for some fibers $\tilde{l}_1, \tilde{l}_2 \in |\mathcal{O}_{\mathbb{F}_{m}}(F)|$.\\

\begin{proof}[Proof of Theorem \ref{main_theorem}]
We consider the following divisors on $ \mathbb{P}^{2} $:
	\begin{equation*}
		\begin{split}
			& B_{1} := l_1,\\
			& B_{2}: =l_2, \\ 
			& B_{3} : = l_3 + C.
		\end{split}
	\end{equation*}
	
	Note that $B_1, B_2\in |\mathcal{O}_{\mathbb{P}^{2}}(1)|$ and $ B_3 \in |\mathcal{O}_{\mathbb{P}^{2}}(2n+1))| $.\\
 
We also consider the following line bundles on $ \mathbb{P}^2 $:
	\begin{equation*}
		\begin{split}
			& L_1:= \mathcal{O}_{\mathbb{P}^{2}}(n+1), \\
			& L_2:=\mathcal{O}_{\mathbb{P}^{2}}(n+1),\\
			& L_3:= \mathcal{O}_{\mathbb{P}^{2}}(1).
		\end{split}
	\end{equation*}

The building data $\{L_i,B_j\}_{i,j}$ defines a $ \mathbb{Z}^2_2 $-cover $ \varphi\colon X \to \mathbb{P}^{2}$. On the one hand, taking into account \cite[Lemma 7.5]{MR661198}, we get:
	\begin{itemize}
		\item The singular set of $B_3$ restricted to $B_3\setminus(B_1\cup B_2)$ consists of $n$ singularities of type $D_{n+2}$.
		\item The singular set of $B_1+B_3$ restricted to $ B_1\cap B_3 $ consists of $n$ singularities of type $D_{n +2}$.
		\item The singular set of $B_2+B_3$ restricted to $ B_2\cap B_3 $ consists of $n$ singularities of type $D_{n +2}$.
	\end{itemize}
Thus, the singular set of $X$ consists of: 
	\begin{itemize}
		\item $2n$ singularities of type $D_{n+2}$ by Remark \ref{DoubleSing}.
		\item $ n $ singularities of type $ A_{2n-1} $ by Corollary \ref{Z22Sing},$ii)$.
		\item $ n $ singularities of type $ A_{2n-1} $ by Corollary \ref{Z22Sing},$ii)$.
	\end{itemize}
 
On the other hand, by Proposition \ref{Z22Formulas} we have that $K_X$ is ample because $2K_{X}$ is the pull-back via $\varphi$ of the ample divisor $$\mathcal{O}_{\mathbb{P}^{2}}\left(2n-3\right).$$
Moreover:
	\begin{equation*}
		\begin{split}
			K_{X}^2 & = 4n^2 - 12n +9,\\
			\chi\left( \mathcal{O}_{X}\right) &=n^2 -n+1,\\
			p_g\left( X\right) & =n^2 -n,\\
			q\left( X\right)  & = 0,\\
			h^{1,1}\left( X\right) & = 6n^2 + 2n +1.
		\end{split}
	\end{equation*}	
	We conclude that $X\in\mathfrak{M}_{K^2,\chi}$ is a canonical model whose minimal resolution has maximal Picard number $  h^{1,1}\left( X\right) = 6n^2 + 2n +1$ by Lemma \ref{PicardSurjectiveMorphism}.\\
\end{proof}

\begin{proof}[Proof of Theorem \ref{main_theorem_1}]

We consider the following divisors on $ \mathbb{F}_{m} $:
	\begin{equation*}
		\begin{split}
			& B_{1} := 0,\\
			& B_{2}: =\tilde{l}_1+\tilde{l}_2 \in|\mathcal{O}_{\mathbb{F}_{m}}(2F)|,\\ 
			& B_{3} : = \Delta_0 + \tilde{l}_3 + \tilde{C} \in |\mathcal{O}_{\mathbb{F}_{m}}(\left(2n+2\right)\Delta_0 + \left( 2n+1\right)mF)|,
		\end{split}
	\end{equation*}
 if $m$ is even and:
 	\begin{equation*}
		\begin{split}
			& B_{1} := \tilde{l}_1\in|\mathcal{O}_{\mathbb{F}_{m}}(F)|,\\
			& B_{2}: =\tilde{l}_2\in|\mathcal{O}_{\mathbb{F}_{m}}(F)| ,\\ 
			& B_{3} : = \Delta_0 + \tilde{l}_3 + \tilde{C} \in |\mathcal{O}_{\mathbb{F}_{m}}(\left(2n+2\right)\Delta_0 + \left( 2n+1\right)mF)|,
		\end{split}
	\end{equation*}
 if $m$ is odd.\\

 We also consider the following line bundles on $ \mathbb{F}_{m} $:
	\begin{equation*}
		\begin{split}
			& L_1:= \mathcal{O}_{\mathbb{F}_{m}}\left(\left( n+1\right)\Delta_0+\left(mn+\frac{1}{2}{\color{blue}n}+1\right)F\right), \\
			& L_2:=\mathcal{O}_{\mathbb{F}_{m}}\left(\left( n+1\right)\Delta_0+\left(mn+\frac{1}{2}{\color{blue}n}\right)F\right),\\
			& L_3:= \mathcal{O}_{\mathbb{F}_{m}}\left(F\right),
		\end{split}
	\end{equation*}
 if $m$ is even and:
 	\begin{equation*}
		\begin{split}
			& L_1:= \mathcal{O}_{\mathbb{F}_{m}}\left(\left( n+1\right)\Delta_0+\left(mn+\frac{1}{2}({\color{blue}n}-1)+1\right)F\right), \\
			& L_2:=\mathcal{O}_{\mathbb{F}_{m}}\left(\left( n+1\right)\Delta_0+\left(mn+\frac{1}{2}({\color{blue}n}-1)+1\right)F\right),\\
			& L_3:= \mathcal{O}_{\mathbb{F}_{m}}\left(F\right),
		\end{split}
	\end{equation*}
 if $m$ is odd.\\

The building data $\{L_i,B_j\}_{i,j}$ defines a $ \mathbb{Z}^2_2 $-cover $ \varphi\colon X \to \mathbb{F}_{m}$. On the one hand, taking into account \cite[Lemma 7.5]{MR661198}, we get:
	\begin{itemize}
		\item The singular set of $B_3$ restricted to $B_3\setminus(B_1\cup B_2)$ consists of $mn$ singularities of type $D_{n+2}$.
		\item The singular set of $B_1+B_2+B_3$ restricted to $ (B_1\cap B_3) \cup (B_2\cap B_3)$ consists of $2n$ singularities of type $D_{mn +2}$.
	\end{itemize}
Thus, the singular set of $X$ consists of: 
	\begin{itemize}
		\item $2mn$ singularities of type $D_{n+2}$ by Remark \ref{DoubleSing}.
		\item $2 n $ singularities of type $ A_{2mn-1} $ by Corollary \ref{Z22Sing},$ii)$.
	\end{itemize}
 
On the other hand, by Proposition \ref{Z22Formulas} we have that $K_X$ is ample because $2K_{X}$ is the pull-back via $\varphi$ of the ample divisor $$\mathcal{O}_{\mathbb{F}_{m}}\left(\left( 2n-2\right)\Delta_0 + \left(2mn-m-2\right)F\right).$$
Moreover:
	\begin{equation*}
		\begin{split}
			K_{X}^2 & = \left(\left( 2n-2\right)\Delta_0 + \left(2mn-m-2\right)F\right)^2=4mn^2 - 4\left( m+2\right)n +8,\\
			\chi\left( \mathcal{O}_{X}\right) &=mn^2 -n+1,\\
			p_g\left( X\right) & =mn^2 -n,\\
			q\left( X\right)  & = 0,\\
			h^{1,1}\left( X\right) & = 6mn^2 + \left( 4m-2\right)n +2.
		\end{split}
	\end{equation*}	
	We conclude that $X\in\mathfrak{M}_{K^2,\chi}$ is a canonical model whose minimal resolution has maximal Picard number $  h^{1,1}\left( X\right) = 6mn^2 + \left( 4m-2\right)n +2$ by Lemma \ref{PicardSurjectiveMorphism}.
\end{proof}

\begin{proof}[Proof of Theorem \ref{main_theorem_2}]

We consider following the divisor on $ \mathbb{F}_{m} $:
$$B = \tilde{C} + \tilde{l}_1 +\tilde{l}_2 \in |\mathcal{O}_{\mathbb{F}_{m}}(2n\Delta_0 + \left( 2mn+2\right)F)|$$

The divisor $B$ defines a $ \mathbb{Z}_2 $-cover $ \varphi\colon X \to \mathbb{F}_{m}$. On the one hand, taking into account \cite[Lemma 7.5]{MR661198}, we get that the singular set of $B$ consists of $2n$ singularities of type $D_{mn+2}$ and $mn$ singularities of type $A_{n-1}$.\\
	
Thus, the singular set of $X$ consists of $2n$ singularities of type $D_{mn+2}$ and $mn$ singularities of type $A_{n-1}$.\\

On the other hand, we have that $K_X$ is ample because $K_{X}$ is the pull-back via $\varphi$ of the ample divisor $$\mathcal{O}_{\mathbb{F}_{m}}\left(\left( n-2\right)\Delta_0 + \left(mn-m-1\right)F\right).$$
Moreover:
	\begin{equation*}
		\begin{split}
			K_{X}^2 & = 2\left(\left( n-2\right)\Delta_0 + \left(mn-m-1\right)F\right)^2=2mn^2 - 4\left( m+1\right)n +8,\\
			\chi\left( \mathcal{O}_{X}\right) &=\frac{1}{2}mn(n-1)+1,\\
			p_g\left( X\right) & =\frac{1}{2}mn(n-1),\\
			q\left( X\right)  & = 0,\\
			h^{1,1}\left( X\right) & = 3mn^2 + \left( 4-m\right)n +2.
		\end{split}
	\end{equation*}	
	We conclude that $X\in\mathfrak{M}_{K^2,\chi}$ is a canonical model whose minimal resolution has maximal Picard number $  h^{1,1}\left( X\right) = 3mn^2 + \left( 4-m\right)n +2$ by Lemma \ref{PicardSurjectiveMorphism}.
\end{proof}

\begin{proof}[Proof of Corollary \ref{sequences_severi}]
Given $m\in\mathbb{Z}_{\geq 3}$ and $n\in 2\cdot \mathbb{Z}_{\geq 1}$, let us consider  the surface $X_{m,n}$ described in Theorem \ref{main_theorem_1}. 
    Then:
    \begin{equation}\label{slope_limit}
\mu(X_{m,n})=\frac{K^2_{X_{m,n}}}{\chi(\mathcal{O}_{X_{m,n}})}=4+\frac{4(1-n-mn)}{1-n+mn^2}.
\end{equation}
Defining $X_m^n=Y_n^m=X_{m,n}$, the result easily follows from equation (\ref{slope_limit}).
\end{proof}

\begin{Remark}\label{disjoint_constructions}
Let us consider the following subsets of $\mathbb{Z}\times \mathbb{Z}$:
\begin{equation*}
    \begin{split}
        A_1 & :=\left\{\left(4n^2-12n+9,n^2-n+1\right):n\in\mathbb{Z}_{\geq 2}\right\},\\
        A_2 & :=\left\{\left(4mn^2-4(m+2)n+8,mn^2-n+1\right):m\in\mathbb{Z}_{\geq 3},n\in2\cdot \mathbb{Z}_{\geq 1}\right\},\\
        A_3 & :=\left\{\left(2mn^2-4(m+1)n+8 ,\frac{1}{2}mn(n -1)+1\right):m\in\mathbb{Z}_{\geq 2}, n\in2\cdot \mathbb{Z}_{\geq 2}\right\},\\
        B & := \left\{\left(2(n-3)^2, \frac{1}{2}(n-1)(n-2)+1\right):n\in\mathbb{Z}_{\geq 4}\right\}.
    \end{split}
\end{equation*}
These sets contain the invariants of the surfaces of Theorem \ref{main_theorem}, Theorem \ref{main_theorem_1}, Theorem \ref{main_theorem_2} and \cite[Theorem 3]{MR661198} respectively. We claim that:
\begin{enumerate}
    \item[i)] $A_1\cap B=A_2\cap B=A_3\cap B=A_1\cap A_2=A_1\cap A_3=\varnothing$.
    \item[ii)] 
    Both $A_2\setminus A_3$ and $A_3\setminus A_2$ contain an infinite amount of pairs.
    \item[iii)]  $A_2 \cap A_3$  contains an infinite amount of pairs.
\end{enumerate}
\end{Remark}
The equalities $A_1\cap B=A_2\cap B=A_3\cap B=\varnothing$ follow noticing that $\frac{1}{2}K^2$ is a perfect square if $(K^2,\chi)\in B$, which is not the case for  pairs $(K^2,\chi)\in A_1\cup A_2\cup A_3$.\\
The equalities $A_1\cap A_2=A_1\cap A_3=\varnothing$ follow noticing that $K^2$ is odd for every pair $(K^2,\chi)\in A_1$ whereas $K^2$ is even for every pair $(K^2,\chi)\in A_2\cup A_3$.\\
The fact that $A_3\setminus A_2$ contains an infinite amount of pairs follows noticing that $\chi$ is odd for every pair  $(K^2,\chi)\in A_2$ but it is easy to construct an infinite sequence of pairs $(K^2,\chi)\in A_3$ such that $\chi$ is even.\\
In order to prove that $A_2\setminus A_3$ contains an infinite amount of pairs it suffices to notice that the intersection of $A_2$ with the Noether line is $$\{(8m-8,4m-1):m\in\mathbb{Z}_{\geq 3}\}$$   and the intersection of $A_3$ with the Noether line is empty.\\
To prove that $A_2 \cap A_3$  contains an infinite amount of pairs it suffices to show that $A_2 \cap A_3$  contains the  set
    $$T: =\left\{\left(2t(t-1)(t-4) +8,\frac{1}{2}t(t-1)(t-3) +1\right): t\in2\cdot \mathbb{Z}_{\geq 3}\right\}.$$
\noindent
Now, setting $m = \frac{t}{2}-1$ and $n = t-1$, we get $T \subseteq A_2 $, and setting $m = t-3$ and $n = t$, we get $T \subseteq A_3 $.

\begin{Remark}
    Note that, while some of our examples lie in the region $K^2\leq \frac{5}{2}\chi-11$ covered by \cite[Theorem 1]{MR4761778}, we have constructed infinitely many surfaces of general type with maximal Picard number that do not. This claim follows from Corollary \ref{sequences_severi} taking into account that $K^2/\chi\leq \frac{5}{2}$ if $K^2\leq \frac{5}{2}\chi-11$.
\end{Remark}

\begin{Remark}
    As Figure \ref{fig:linesk2chidouble} suggests, the examples of Theorem \ref{main_theorem_1}  and Theorem \ref{main_theorem_2} are arranged in lines. 
    
   On the one hand, given $n\in2\cdot \mathbb{Z}_{\geq 1}$, it is easy to see that the surface $X_{m,n}$ of Theorem \ref{main_theorem_1} lies on the line
    $$K^2=4\frac{n-1}{n}\chi-4\frac{(n+1)(n-1)}{n}$$
    for every integer $m\geq 3$.
    
    On the other hand, given an integer 
    $n\in2\cdot \mathbb{Z}_{\geq 2}$, it is easy to see that the surface $X_{m,n}$ of Theorem \ref{main_theorem_2} lies on the line
    $$K^2=4\frac{n-2}{n-1}\chi-4\frac{n(n-2)}{n-1}$$
    for every integer $m\geq 2$.
\end{Remark}

 \begin{Acknowledgments}
    The authors are grateful to Professor Margarida Mendes Lopes and Professor Jungkai  Alfred Chen for their valuable feedback on this article. This research is funded by Vietnam Ministry of Education and Training (through National Key Program for the Development of Mathematics in the 2021–2030 period) under grant number B2025-CTT-04. 
 \end{Acknowledgments}

\bibliographystyle{plain} 
\bibliography{Vicente-Bin}

\begin{paracol}{2}
\BinAddresses 
  \switchcolumn
\VicenteAddresses
\end{paracol}

\end{document}